\newtheorem{theorem}{Theorem}[section]
\newtheorem{lemma}[theorem]{Lemma}
\newtheorem{corollary}[theorem]{Corollary}
\newtheorem{proposition}[theorem]{Proposition}
\newtheorem{sublemma}{}[theorem]
\theoremstyle{definition}
\theoremstyle{remark}
\numberwithin{equation}{section}
\newcommand{\A}{\mathscr{A}}
\newcommand{\B}{\mathscr{B}}
\newcommand{\I}{\mathscr{I}}
\newcommand{\C}{\mathscr{C}}
\newcommand{\1}{(i)}
\newcommand{\2}{(ii)}
\newcommand{\3}{(iii)}
\newcommand{\5}{(v)}
\newcommand{\M}{M(E,\A,c)}
\newcommand{\cl}{{\rm cl}}
\newcommand{\si}{{\rm si}}
\newcommand{\e}{\backslash e}
\newcommand{\Utfi}{U_{2,5}}
\newcommand{\Uthfi}{U_{3,5}}
\begin{document}

\title{Laminar Matroids}

\author{Tara Fife}
\email{fi.tara@gmail.com}
\author{James Oxley}
\email{oxley@math.lsu.edu}
\address{Mathematics Department, Louisiana State University, Baton Rouge, Louisiana, USA}

\subjclass{05B35, 05D15}
\date{\today}

 \begin{abstract}
 
A laminar family is a collection $\A$ of subsets of a set $E$ such that, for any two intersecting sets, one is contained in the other. For a capacity function $c$ on $\A$, let $\I$ be 
%the set
$\{I:|I\cap A| \leq c(A)\text{ for all $A\in\A$}\}$. Then $\I$ is the collection of independent sets of a (laminar) matroid on $E$.  We present a method of compacting laminar presentations, characterize the class of laminar matroids by their excluded minors, present a way to construct all laminar matroids using basic operations, and   compare the class of laminar matroids to other well-known classes of matroids.

\end{abstract}

\maketitle

\section{Introduction}
\label{Preliminaries}

The matroid terminology used here will follow Oxley \cite{James}. We start with some definitions not found in \cite{James}. Given a set $E$, a family $\A$ of subsets of $E$ is \textit{laminar} if, for every two sets $A$ and $B$ in $\A$ with $A\cap B\not=\emptyset$, either $A\subseteq B$ or $B\subseteq A$. Let $\A$ be a laminar family of subsets of a finite set $E$. Let $c$ be a function from $E$ into the set of real numbers. Define $\I$ to be the set of subsets $I$ of $E$ such that $|I\cap A| \leq c(A)$ for all $A$ in $\A$. It is well known (see, for example, \cite{LF,GK, IW, kl}) and easily checked that $\I$ is the set of independent sets of a matroid on $E$. We call $c$ a \textit{capacity function} for the matroid $(E,\I)$ and write this matroid as $M(E,\A,c)$. A matroid $M$ is \textit{laminar} if it is isomorphic to $M(E,\A,c)$ for some set $E$, laminar family $\A$, and capacity function $c$. We call $(E,\A,c)$ a {\it presentation} for $M$.

Laminar matroids have appeared quite frequently in the literature during the last fifteen years. Interest in them has focused  on how certain optimization problems,  particularly the matroid secretary problem, behave for such matroids \cite{bik, CL, FSZ,IW,JSZ, Soto}. This work was reviewed  by Huynh \cite{huynh}. With the exception of the thesis of Finkelstein~\cite{LF}, there appears to have been little work done on exploring the matroid properties of the class of laminar matroids. Here we do just that.  In particular, we give three characterizations of this class   of matroids beginning with  the following. 

\begin{theorem}
\label{Converse}
A matroid is laminar if and only if, for all circuits $C_1$ and $C_2$  with $C_1\cap C_2\not=\emptyset$, either $\cl(C_1)\subseteq \cl(C_2)$, or $\cl(C_2)\subseteq \cl(C_1)$. 
%Further, the minimal presentation is given by letting $\A=\{\cl(C)-\cl(\emptyset):\text{$C$ is a circuit of $N$}\}$.
\end{theorem}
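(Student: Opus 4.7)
My plan is to prove both directions by understanding, in any laminar presentation, which members of the laminar family realize the circuit closures. In the reverse direction I will then take the laminar family to be the family of circuit closures itself.

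For the forward direction, suppose $M=M(E,\A,c)$ and let $C$ be a circuit. Minimality of $C$ as a dependent set forces every $A\in\A$ witnessing the dependence of $C$ to satisfy $C\subseteq A$ and $c(A)=|C|-1$; the collection of all such $A$ forms a chain by laminarity (they all contain $C$), so has a top element $A^{\max}(C)$. The key claim is $\cl(C)=A^{\max}(C)$. One inclusion is immediate since any $e\in A^{\max}(C)\setminus C$ creates a capacity violation at $A^{\max}(C)$, making $C+e$ dependent and thus placing $e$ in $\cl(C)$. For the other inclusion, given $e\in\cl(C)\setminus C$ I take a circuit $C''\subseteq C+e$ containing $e$, let $A''\in\A$ witness $C''$, and observe that $A^{\max}(C)\cap A''\supseteq C''-e\neq\emptyset$. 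Laminarity forces the two to be comparable; the possibility $A^{\max}(C)\subsetneq A''$ is ruled out either by the maximality of $A^{\max}(C)$ (when $|C''|=|C|$, so $A''\in\A_C$) or by the fact that $C-g$ would then be dependent for every $g\in C$ (when $|C''|<|C|$, since $c(A'')<|C|-1$). Hence $A''\subseteq A^{\max}(C)$ and $e\in A^{\max}(C)$. Once $\cl(C)=A^{\max}(C)$ is established, intersecting circuits $C_1,C_2$ yield intersecting members $\cl(C_i)$ of $\A$, comparable by laminarity. Loops, being circuits of size one that never meet any other circuit, need only a trivial separate check.

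For the converse, I set $\A=\{\cl(C):C\text{ a circuit of }M\}\cup\{E\}$ with $c(A)=r(A)$. The equality $M=M(E,\A,c)$ is immediate: an independent $I$ satisfies $|I\cap A|=r(I\cap A)\le r(A)$, while any dependent $I$ contains a circuit $C$ and hence $|I\cap\cl(C)|\ge|C|>r(\cl(C))$. The substantive task is to show $\A$ is laminar. When $\cl(C_1)\cap\cl(C_2)\neq\emptyset$ but $C_1\cap C_2=\emptyset$, pick $e\in\cl(C_1)\cap\cl(C_2)$; for each $f_i\in C_i$ with $e\notin C_i$ the fundamental circuit $C(e,C_i-f_i)$ contains $e$ and lies inside $C_i+e$, hence its closure lies in $\cl(C_i)$. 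Fundamental circuits from $C_1$ and $C_2$ always intersect at $e$, so the hypothesis gives comparability of their closures. Using that $\bigcup_{f\in C_i}C(e,C_i-f)=C_i+e$, a dichotomy argument then forces comparability of $\cl(C_1)$ and $\cl(C_2)$: either every fundamental circuit from one side has closure inside the other side's circuit closure (giving $\cl(C_i)\subseteq\cl(C_j)$), or the reverse.

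The main technical obstacle is this final step of the reverse direction: propagating the hypothesis from intersecting circuits to the closures of disjoint circuits whose closures happen to meet. The key supporting lemma is that $\bigcup_{f\in C}C(e,C-f)=C+e$ whenever $C$ is a circuit and $e\in\cl(C)\setminus C$ is not a loop, which I would derive from the fact that the ``coordinate hyperplanes'' $\cl((C-g)-f)$ determined by the basis $C-g$ of $\cl(C)$ intersect only in the loops of $M$. The forward direction, by contrast, is conceptually straightforward once one identifies $\cl(C)$ with the maximal capacity-saturating set of the presentation.
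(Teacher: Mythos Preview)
Your overall strategy matches the paper's: in the forward direction you identify the closure of each circuit with a member of the laminar family, and in the reverse direction you take $\A$ to be the set of circuit closures with capacity equal to rank. The paper does the forward direction via its \emph{canonical} presentation (Lemma~2.5 shows $A_C=\cl(C)-\cl(\emptyset)$), whereas you work in an arbitrary presentation with the maximal witnessing set $A^{\max}(C)$; for the reverse direction the paper uses only two circuits $D_i,D_i'$ covering $C_i\cup e$ (Lemma~2.8), while you use all fundamental circuits $C(e,C_i-f)$ and your covering lemma $\bigcup_f C(e,C_i-f)=C_i\cup e$. Both routes are valid and lead to the same case analysis; the paper's is slightly more economical, yours slightly more explicit.

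Two technical points to repair. First, your displayed claim $\cl(C)=A^{\max}(C)$ is \emph{false} in general when $M$ has loops: a loop $\ell$ lies in $\cl(C)$ for every circuit $C$, but there is no reason $\ell$ lies in any $A$ witnessing $C$. The correct statement is $\cl(C)-\cl(\emptyset)\subseteq A^{\max}(C)\subseteq\cl(C)$, which is exactly what the paper proves. This still suffices for the theorem, since if $A^{\max}(C_1)\subseteq A^{\max}(C_2)$ then $\cl(C_1)=A^{\max}(C_1)\cup\cl(\emptyset)\subseteq A^{\max}(C_2)\cup\cl(\emptyset)=\cl(C_2)$, but you should not assert equality. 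Second, in showing $A^{\max}(C)\subseteq\cl(C)$ you write that adding $e\in A^{\max}(C)\setminus C$ ``makes $C+e$ dependent''; that is vacuous, since $C+e\supseteq C$ is always dependent. What you need (and what your capacity-violation observation actually gives) is that $(C-f)\cup e\subseteq A^{\max}(C)$ has $|C|>c(A^{\max}(C))$ elements, so $(C-f)\cup e$ is dependent for every $f\in C$, whence $e\in\cl(C-f)=\cl(C)$.
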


As we shall see, it is not difficult to show that the class %$\mathscr{L}$  
of laminar matroids is minor-closed. For each $r \ge 3$, let $Y_r$ be the  matroid that is obtained by truncating, to rank $r$, the parallel connection of two $r$-element circuits.  From the last result, $Y_r$ is not laminar. Indeed, the collection of such matroids is the set of excluded minors for the class of laminar matroids. 

\begin{theorem}
\label{exmin}
A matroid is laminar if and only if it has no minor isomorphic to any member of $\{Y_r: r \ge 3\}$.
\end{theorem}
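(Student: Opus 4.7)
The proof naturally splits into two directions.

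For the forward direction, I would show each $Y_r$ is an excluded minor. Non-laminarity of $Y_r$ follows directly from Theorem~\ref{Converse}: the two $r$-element circuits $C_1, C_2$ of $Y_r$ coming from the parallel-connection construction meet only in the basepoint $p$, and in the rank-$r$ matroid $Y_r$ each has rank $r - 1$, making each a circuit-hyperplane with $\cl(C_i) = C_i$; since $C_1 \ne C_2$, these closures are incomparable. For minor-minimality, I would use the obvious symmetries of $Y_r$ (arbitrary permutations within $C_1 \setminus \{p\}$ and within $C_2 \setminus \{p\}$, plus the swap $C_1 \leftrightarrow C_2$) to reduce to the four cases $Y_r \setminus p$, $Y_r \setminus e_1$, $Y_r/p$, $Y_r/e_1$ (with $e_1 \in C_1 \setminus \{p\}$), and exhibit an explicit laminar presentation for each. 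For instance, $Y_r \setminus p \cong U_{r,\,2r-2}$ is trivially laminar (take $\A = \{E\}$ with $c(E) = r$), and $Y_r/p$ is presented by $\A = \{E,\, C_1 \setminus \{p\},\, C_2 \setminus \{p\}\}$ with capacities $r-1,\, r-2,\, r-2$.

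For the converse, I would first show that the class of laminar matroids is minor-closed by giving explicit presentations: both $M \setminus e$ and $M/e$ use the family $\A' = \{A \setminus \{e\} : A \in \A\}$, leaving capacities unchanged for deletion and decrementing $c(A)$ on sets containing $e$ for contraction of a non-loop; laminarity of $\A'$ is immediate from that of $\A$. The contrapositive then reduces the converse to: if $M$ is non-laminar, then $M$ has some $Y_r$ as a minor. Applying Theorem~\ref{Converse}, take circuits $C_1, C_2$ of $M$ with $C_1 \cap C_2 \ne \emptyset$ and incomparable closures. A first reduction contracts elements of $C_1 \cap C_2$ one at a time to arrive at $|C_1 \cap C_2| = 1$: for any $q \in C_1 \cap C_2$, each $C_i \setminus \{q\}$ remains a circuit of $M / q$, and $\cl_{M/q}(C_i \setminus \{q\}) = \cl_M(C_i) \setminus \{q\}$, so incomparability persists because the witnesses in $\cl(C_1) \triangle \cl(C_2)$ differ from $q$.

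The main obstacle is the final step: extracting a $Y_r$ minor from a matroid $N$ in which circuits $C_1, C_2$ meet in $\{p\}$ and have incomparable closures. My plan is to continue reducing by deletions and contractions until the restriction to $C_1 \cup C_2$ becomes the parallel connection of two equal-sized circuits of size $r$, with each $C_i$ a circuit-hyperplane of the resulting minor, and so to identify the latter with $Y_r$. Concretely, I would delete elements of $(\cl(C_1) \setminus C_1) \cup (\cl(C_2) \setminus C_2)$ to collapse each closure onto its circuit, delete elements outside $\cl(C_1) \cup \cl(C_2)$ to restrict the support, and carefully delete elements inside $C_1$ or $C_2$ to equalize their sizes; a rank calculation then identifies the restriction with the parallel-connection truncation $Y_r$. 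The technical difficulty is that these operations interact: deleting from $C_i$ can change both which sets are circuits and whether closures remain incomparable (especially if there are elements in $C_i \cap \cl(C_j)$ beyond $p$), so the argument requires a careful choice of which elements to delete, and possibly a judicious re-choice of $C_1, C_2$ as smaller circuits inside the existing closures.
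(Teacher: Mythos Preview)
Your forward direction and minor-closedness arguments are essentially those of the paper, and are fine.

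The converse, however, has a genuine gap that you yourself flag. Your plan is to take arbitrary witnessing circuits $C_1,C_2$ and massage them into a $Y_r$ by deletions and contractions, but the step ``delete elements inside $C_1$ or $C_2$ to equalize their sizes'' cannot work as stated: deleting an element of a circuit leaves an independent set, not a smaller circuit. More seriously, even if you contract instead, you have no mechanism for forcing $\cl(C_i)=C_i$ or $|C_1|=|C_2|$, and without these you cannot identify the resulting restriction with $Y_r$. The paper sidesteps this entirely by the standard excluded-minor manoeuvre: rather than extracting $Y_r$ from an arbitrary non-laminar matroid, it assumes $N$ is an excluded minor, so every proper minor of $N$ is laminar, and it further chooses $C_1,C_2$ to minimise $|C_1\cup C_2|$. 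The minimality is what forces $\cl(C_i)=C_i$: if $e\in\cl(C_1)\setminus C_1$, one splits $C_1\cup e$ into two smaller circuits through $e$ via Lemma~\ref{TwoCircuitClosure} and applies the minimality of $|C_1\cup C_2|$ to each of these against $C_2$. The excluded-minor hypothesis is what yields $|C_1|=|C_2|=r(N)$: for $e\in C_1\setminus C_2$ the minor $N/e$ is laminar, so Theorem~\ref{Converse} applied in $N/e$ to the intersecting circuits $C_1-e$ and $C_2$ forces $\cl_N(C_2\cup e)=E(N)$, whence $r(N)=|C_2|$. Without these two devices your argument does not close; the ``judicious re-choice of $C_1,C_2$'' you mention is exactly the minimality selection, and it needs to be made at the outset rather than mid-stream.
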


A transversal matroid is {\em nested} if it has a nested  presentation, that is, a transversal presentation $(B_1,B_2,\dots B_n)$  such that $B_1\subseteq B_2\subseteq\dots\subseteq B_n$. These matroids were introduced by Crapo \cite{crapo} and have appeared under a variety of names including freedom matroids \cite{CS}, generalized Catalan matroids \cite{BDMN}, shifted matroids \cite{ard} and Schubert matroids \cite{soh}  (see \cite{LPSP}). As we shall show, all nested matroids are laminar. 
Oxley, Prendergast, and Row \cite{Don} showed that the  class of nested matroids is minor-closed and they determined the excluded minors for this class. As the reader will observe, these excluded minors are strikingly similar to the excluded minors for the class of laminar matroids.

\begin{theorem}
\label{exnested}
A matroid is nested if and only if, for all $r \ge 2$, it has no minor isomorphic to the matroid that is obtained by truncating, to rank $r$, the direct sum of two $r$-element circuits. 
\end{theorem}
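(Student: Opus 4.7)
The plan is to organize the proof around the known characterization that a matroid is nested if and only if its cyclic flats are totally ordered by inclusion. With this equivalence as the main tool, both directions become conceptual, and it would be natural to establish (or cite) it as the opening step.

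For the forward direction, I would first note that the class of nested matroids is minor-closed; this follows from standard descriptions of how the cyclic flats of $M\backslash e$ and $M/e$ arise from those of $M$, which preserve chain structure. Letting $T_r = \operatorname{Trunc}_r(C_r \oplus C_r)$, each of the two $r$-element circuits is a cyclic flat of $T_r$: it is a circuit (hence cyclic), and it remains a flat after truncation, because any element of the other summand strictly increases its rank in $C_r \oplus C_r$ and these ranks stay below $r$ in the truncation. The two such cyclic flats are disjoint, hence incomparable, so $T_r$ is not nested. Minor-closedness then rules out $T_r$ as a minor of any nested matroid.

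For the converse, assume $M$ is not nested and choose two incomparable cyclic flats $F_1, F_2$ minimizing $|F_1 \cup F_2| - |F_1 \cap F_2|$. Minimality should force $F_1 \cap F_2$ to be a cyclic flat and $F_1 \cup F_2$ to be a flat, since otherwise one could refine the pair. I would then pass to the minor $N := (M|(F_1 \cup F_2))/(F_1 \cap F_2)$; in $N$, the images of $F_1 \setminus (F_1 \cap F_2)$ and $F_2 \setminus (F_1 \cap F_2)$ are disjoint cyclic subsets of equal rank $r_M(F_i) - r_M(F_1 \cap F_2)$, with no cyclic flat strictly between them. A final truncation together with parallel-deletions trimming each side to a circuit of common size should then extract exactly $T_r$ for the appropriate $r$.

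The main obstacle is this last extraction step: showing that the reductions really yield some $T_r$ on the nose, rather than merely a matroid harbouring two incomparable cyclic flats. Concretely, one must verify that the two sides of $N$ become circuits of common size after the deletions and truncation, that the final rank lands at exactly $r$, and that no extraneous cyclic flats arise along the way to obstruct the $T_r$ structure. Tracking these ranks carefully through contraction and truncation, and using the minimality of $(F_1,F_2)$ to prevent spurious cyclic flats from appearing, constitutes the technical heart of the argument.
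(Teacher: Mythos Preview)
The paper does not contain a proof of this theorem; it is quoted as a result of Oxley, Prendergast, and Row~\cite{Don}, with no argument supplied here. There is therefore no in-paper proof for your proposal to be compared against.

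That said, your outline is along standard lines and rests on the cyclic-flat characterization that the paper later records as Lemma~\ref{nestedcyc}, itself attributed to~\cite{Don} and~\cite{LPSP}. One slip to flag: after contracting $F_1\cap F_2$, the two sides have ranks $r_M(F_1)-r_M(F_1\cap F_2)$ and $r_M(F_2)-r_M(F_1\cap F_2)$ respectively, and there is no reason these coincide, contrary to your phrase ``of equal rank''. You will need an extra contraction inside the larger side to equalize the circuit sizes before the final truncation.

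Beyond that, the extraction step you flag is indeed where the work lies, and your sketch does not yet close it. Your chosen minimality invariant $|F_1\cup F_2|-|F_1\cap F_2|$ does not obviously force $F_1\cap F_2$ to be a cyclic flat (it is always a flat, but an intersection of cyclic flats need not be cyclic, and it is unclear how a coloop of the intersection would let you refine the pair). Nor is it clear that, after restricting to two disjoint circuits of common size $r$, the restriction has rank $2(r-1)$, which is what you need for truncation to rank $r$ to produce exactly $T_r$ rather than something already of lower rank. These points need to be pinned down before the argument is complete.
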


Crapo~\cite{crapo} showed that nested matroids coincide with the class of matroids that can be obtained from the empty matroid by applying the operations of adding a coloop and taking a free extension (see also \cite[Theorem 3.14]{BDMN}). A straightforward modification of this result yields the following  characterization of  nested matroids. 

\begin{theorem}
\label{nestedcons}
The class of nested matroids coincides with the class of matroids that can be obtained from the empty matroid by adding coloops and truncating.
\end{theorem}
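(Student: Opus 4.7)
The plan is to invoke Crapo's characterization together with the elementary identity that the free extension of $M$ by a new element $e$ equals the truncation of $M \oplus \{e\}$ to rank $r(M)$. This identity is essentially the definition of free extension: adjoining $e$ as a coloop and then truncating back one step places $e$ freely over $M$.

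Granted this identity, one direction follows at once. Crapo's theorem produces any nested matroid from the empty matroid through a sequence of coloop-additions and free-extensions; replacing each free-extension step by the pair (add a coloop, then truncate) yields a construction sequence that uses only the operations in the statement.

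For the reverse direction I would show by induction on the length of the construction sequence that nested matroids are closed under both adding a coloop and truncating. Suppose $M$ has nested presentation $B_1 \subseteq B_2 \subseteq \cdots \subseteq B_r$. I would argue that the chain $\{e\} \subseteq B_1 \cup \{e\} \subseteq \cdots \subseteq B_r \cup \{e\}$ is a nested presentation of $M \oplus \{e\}$: any partial transversal using $e$ may be taken to assign $e$ to the first index, after which the remaining matching reduces to one for $(B_1, \ldots, B_r)$; partial transversals missing $e$ cannot use the first index (whose set is $\{e\}$) and so again reduce to the original family. Similarly, for $r \ge 1$ I would take the shortened chain $B_2 \subseteq \cdots \subseteq B_r$ (empty if $r = 1$) as a nested presentation of $T(M)$: partial transversals of this smaller family are automatically partial transversals of the original of size at most $r-1$, and conversely any such partial transversal of the original can be reassigned so as to avoid index $1$.

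The main obstacle is the exchange argument in the truncation step. Given a partial transversal $f : I \to \{1, \ldots, r\}$ of $(B_1,\ldots,B_r)$ with $|I| \le r-1$, some index $k$ is unused; if $k > 1$ and $1 \in f(I)$, then $f^{-1}(1) \in B_1 \subseteq B_k$, so $f^{-1}(1)$ may be reassigned to $k$, producing a partial transversal into $\{2,\ldots,r\}$. The degenerate case $r = 1$, where $T(M)$ consists entirely of loops and the empty nested presentation serves, is immediate. Together these verifications complete the inductive step and hence the theorem.
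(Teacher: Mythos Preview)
Your argument is correct and is exactly the ``straightforward modification'' of Crapo's theorem that the paper alludes to; the paper does not actually give a proof of this statement beyond that remark. Your use of the identity $T(M\oplus\{e\}) = $ free extension of $M$ by $e$ handles one containment, and your direct verification that the nested presentations $(\{e\}, B_1\cup\{e\},\ldots,B_r\cup\{e\})$ and $(B_2,\ldots,B_r)$ witness closure under coloop-addition and truncation, respectively, handles the other; the reassignment argument for truncation is the right one.
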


Our third characterization  of the class of laminar matroids is a constructive one that reveals how nested matroids and laminar matroids differ.

\begin{theorem}
\label{construct}
The class of laminar matroids coincides with the class of matroids that can be constructed by beginning with the empty matroid and using the following operations. 
 \begin{itemize}
 \item[(i)] Adding a coloop to a previously constructed matroid.
 \item[(ii)]  Truncating a previously constructed matroid. 
 \item[(iii)] Taking the direct sum of two previously constructed matroids.   
 \end{itemize} 
\end{theorem}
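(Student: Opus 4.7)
The plan is to prove the two inclusions separately. For the easy direction, I would verify that the class of laminar matroids is closed under each of the operations (i)--(iii); combined with the trivial laminarity of the empty matroid, this shows every matroid built by these operations is laminar. Explicitly, given $\M$: for (i), adding a coloop $e$ yields $M(E\cup\{e\},\A,c)$, and $e$ is a coloop since no constraint mentions it; for (ii), truncating $\M$ is achieved by lowering $c(E)$ by one if $E\in\A$ already, or else by adjoining $E$ to $\A$ with capacity $r(M)-1$, leaving the family laminar because $E$ contains every other member of $\A$; for (iii), the direct sum of two laminar presentations $(E_1,\A_1,c_1)$ and $(E_2,\A_2,c_2)$ is presented by $(E_1\cup E_2,\A_1\cup\A_2,c_1\cup c_2)$, which is laminar because any member of $\A_1$ is disjoint from any member of $\A_2$.

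For the converse I would induct on $|E|+|\A|$ over laminar presentations $(E,\A,c)$ of $M$, the base case $|E|+|\A|=0$ being the empty matroid. For the inductive step, let $A_1,\dots,A_k$ be the maximal members of $\A$; the laminar property forces them to be pairwise disjoint. Three cases split the argument. If some $e\in E$ lies in no member of $\A$, then $e$ is a coloop of $M$ (no constraint restricts it), so $M$ is obtained from $M\e=M(E\setminus\{e\},\A,c)$ by adjoining a coloop, and $M\e$ has a strictly smaller presentation. Otherwise the $A_i$ cover $E$. If $k\ge 2$, then a short argument using the laminar property shows that every $A\in\A$ lies in a unique $A_i$, whence $M=M|A_1\oplus\cdots\oplus M|A_k$ with each $M|A_i$ presented by $(A_i,\{A\in\A:A\subseteq A_i\},c)$, a strictly smaller laminar presentation. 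Finally, if $k=1$, then $A_1=E\in\A$; setting $\A'=\A\setminus\{E\}$ and $M'=M(E,\A',c|_{\A'})$, the independent sets of $M$ are precisely those of $M'$ of size at most $c(E)$, so $M$ is obtained from $M'$ by finitely many successive truncations (possibly zero), and $M'$ has one fewer member in its presentation. In each case the inductive hypothesis finishes the step.

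The main (mild) obstacle is verifying the decomposition $M=\bigoplus_i M|A_i$ in the second case together with the truncation identity in the third; both follow directly from unwinding the definition of $\M$ using the maximality of the $A_i$, which by the laminar property prevents any member of $\A$ from straddling two distinct $A_i$. All remaining details are routine bookkeeping about sizes of presentations.
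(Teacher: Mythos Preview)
Your proposal is correct. The only mild care needed is with trivial edge cases (e.g.\ $\emptyset\in\A$ or negative capacities), but since the paper already normalises $c$ to take values in $\N$ and empty members of $\A$ are always inessential, these can be discarded at the outset without affecting the induction.

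Your argument is genuinely different from the paper's, though. The paper works with the \emph{canonical} presentation and inducts on $|E(M)|$: it first disposes of disconnected $M$ via direct sum, then invokes Corollary~\ref{conn} to know $E\in\A$ when $M$ is connected, looks at the children $A_1,\dots,A_n$ of $E$, handles a free element outside $\bigcup A_i$ by coloop-plus-truncation, and finally proves that $M$ equals the truncation to rank $r(M)$ of $M|A_1\oplus\dots\oplus M|A_n$ by comparing non-spanning circuits. Your proof instead works with an \emph{arbitrary} presentation and inducts on $|E|+|\A|$; the key simplification is your Case~3, where you simply strip the top member $E$ from $\A$ and observe that $M$ is a repeated truncation of the resulting matroid $M'$. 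This sidesteps the canonical-presentation machinery (Theorem~\ref{Uniqueness}, Corollary~\ref{conn}, Lemma~\ref{BadChild}) entirely, making your proof more elementary and self-contained. What the paper's route buys is a sharper structural statement along the way: a connected laminar matroid is exhibited explicitly as a single truncation of a direct sum of its restrictions to the children of $E$, whereas in your argument the direct-sum decomposition only emerges after possibly several truncation steps have been unwound.
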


In the next section, we prove Theorem~\ref{Converse} and show that every laminar matroid has a unique presentation with no superfluous information. In Section~\ref{EM}, we prove Theorem~\ref{exmin}, while, in Section~\ref{Construction}, we prove Theorem~\ref{construct} and determine all of the laminar matroids whose duals are also laminar.   Finkelstein~\cite{LF} showed that all laminar matroids are gammoids. Hence, by a result of \cite{PifWel}, all laminar matroids are representable over all sufficiently large fields. 
In Section~\ref{Boring}, we characterize binary laminar matroids and ternary laminar matroids.

\section{Canonical Presentation}
\label{MinPres}

In this section, we obtain a presentation for a laminar matroid that has no redundant information. It is clear that, when dealing with a capacity function $c$ of a laminar matroid, we lose no generality in assuming that the range of $c$ is the set of non-negative integers. The following lemma is an immediate consequence of the definition of laminar matroids.

\begin{lemma}
\label{independant}
If $I$ is independent in $M(E,\A,c)$ and $A \in \A$, then $I$ is independent in $M(E,\A-\{A\},c|_{\A-\{A\}})$. 
\end{lemma}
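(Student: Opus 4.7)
The plan is essentially to unwind the definition of a laminar matroid, since the statement is almost immediate. Suppose $I$ is independent in $M(E,\A,c)$, so that by definition $|I\cap A'|\leq c(A')$ for every $A'\in\A$. To show $I$ is independent in $M(E,\A-\{A\},c|_{\A-\{A\}})$, I need only verify $|I\cap A'|\leq c|_{\A-\{A\}}(A')$ for every $A'\in\A-\{A\}$.

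The key observation is that the family of constraints defining independence in $M(E,\A-\{A\},c|_{\A-\{A\}})$ is a subfamily of those defining independence in $M(E,\A,c)$: removing $A$ from $\A$ simply deletes the single constraint $|I\cap A|\leq c(A)$, while all remaining constraints are unchanged because $c|_{\A-\{A\}}(A')=c(A')$ for every $A'\in\A-\{A\}$. Hence the hypothesis gives more than what is needed.

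There is no real obstacle here; the lemma records the monotonicity of the independence system in the defining family $\A$. I would state the proof in one or two sentences, after which it will be ready to serve as an ingredient in the forthcoming analysis of canonical (non-redundant) presentations, where the point will be to identify precisely which sets $A\in\A$ can be dropped without changing the matroid at all, i.e.\ when the converse inclusion also holds.
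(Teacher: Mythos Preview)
Your proof is correct and matches the paper's approach: the paper simply states that the lemma ``is an immediate consequence of the definition of laminar matroids'' and gives no further argument. Your unwinding of the definition is exactly that immediate consequence.
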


Throughout this section, we shall assume that $M$ is 
the laminar matroid $M(E,\A,c)$. Here and throughout the paper, whenever we write $c(A)$ it will be implicit that $A \in \A$. 
%We start by determining which of the sets in $\A$ are important. 
We say that a set $A\in\A$ is {\em essential} if $\M\not=M(E,\A-\{A\},c|_{\A - \{A\}})$. When $M$ has no loops,   we say that $(E,\A,c)$ is a {\em canonical presentation} for $M$ if every $A\in\A$ is 
essential. When $M$ has a loop, we say that a presentation of $M$ is  {\em canonical} if it can be written as $(E,\A\cup\{A_0\},c)$, where 
$A_0=\cl(\emptyset)$ and $(E-A_0,\A,c|_{\A})$ is a canonical presentation of $M\backslash A_0$. 

We omit the proof of the following  well-known observation (see, for example, \cite[Section 2.4]{kl}).

\begin{lemma} 
\label{BadChild}
Suppose $A$ and $B$ are members of $\A$  such that $B\subsetneqq A$ and $c(B)\geq c(A)$. Then $B$ is not essential.
\end{lemma}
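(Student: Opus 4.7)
My plan is to show directly that the family of independent sets of $\M$ coincides with that of $M(E, \A - \{B\}, c|_{\A-\{B\}})$, which by definition means $B$ is not essential.

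One inclusion is immediate from Lemma~\ref{independant}: any set independent in $\M$ remains independent after deleting the constraint associated with $B$. For the reverse inclusion, I would take an arbitrary $I$ independent in $M(E, \A - \{B\}, c|_{\A-\{B\}})$ and verify the sole missing inequality, namely $|I \cap B| \leq c(B)$. The key observation is that since $B \subsetneq A$, the set $A$ still belongs to $\A - \{B\}$, so the constraint $|I \cap A| \leq c(A)$ is active. Combining $B \subseteq A$ (which gives $|I \cap B| \leq |I \cap A|$) with the hypothesis $c(A) \leq c(B)$ yields
\[
|I \cap B| \;\leq\; |I \cap A| \;\leq\; c(A) \;\leq\; c(B),
\]
as required.

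The two matroids therefore share the same independent sets, so $B$ is not essential. There is no real obstacle here; the only subtlety to flag is the (trivial) point that $A$ genuinely differs from $B$, which is guaranteed by the strict containment $B \subsetneq A$, so that $A$ survives in $\A - \{B\}$ and its constraint is available to dominate the one for $B$.
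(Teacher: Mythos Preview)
Your proof is correct. The paper omits the proof of this lemma, calling it a ``well-known observation,'' so your argument is exactly the kind of short verification the authors had in mind: the constraint for $B$ is dominated by that for $A$, which remains in $\A - \{B\}$ since $B \subsetneqq A$.
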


Let $A$ and $H$ be members of a laminar family $\A$ and suppose that $A \subsetneqq H$. If  there is no $G\in\A $ such that $A\subsetneqq G \subsetneqq H $, then we say that $A$ is a {\em child} of $H$. 
For $A$ in $\A$, denote by $\chi(A)$ the set of children of $A$, and let $S(A)=\{e:e\in A-\cup_{F\in \chi(A)}F\}$. Observe that, in $M|A$, either all of the elements of $s(A)$ are coloops or all such elements are free. 

We define $b(A)=|S(A)|+\sum_{F\in \chi(A)}c(F)$. When  $A$ is essential, we now bound the capacity of $A$ in terms of $b(A)$. 

\begin{lemma}
\label{BadParent}
If $c(A)\geq b(A)$, then $A$ is not essential. 
\end{lemma}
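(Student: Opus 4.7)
The plan is to show the single constraint $|I \cap A| \le c(A)$ is implied by the remaining constraints, so that $\M$ and $M(E,\A-\{A\},c|_{\A-\{A\}})$ have the same collection of independent sets. By Lemma~\ref{independant}, every independent set of $\M$ is independent in the reduced presentation, so we only need the reverse containment.

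First I would verify a small structural fact: the children in $\chi(A)$ are pairwise disjoint. Indeed, if distinct $F_1,F_2\in\chi(A)$ met, laminarity would force one to be contained in the other, say $F_1\subsetneqq F_2$; but then $F_1\subsetneqq F_2\subsetneqq A$, contradicting the definition of a child. Consequently $A$ decomposes as the disjoint union
\[
A \;=\; S(A) \;\sqcup\; \bigsqcup_{F\in\chi(A)} F.
\]

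Now suppose $I\subseteq E$ is independent in $M(E,\A-\{A\},c|_{\A-\{A\}})$, so $|I\cap F|\le c(F)$ for every $F\in\A-\{A\}$. Using the disjoint decomposition above,
\[
|I\cap A| \;=\; |I\cap S(A)| + \sum_{F\in\chi(A)}|I\cap F| \;\le\; |S(A)| + \sum_{F\in\chi(A)} c(F) \;=\; b(A) \;\le\; c(A),
\]
where the last inequality is the hypothesis. Hence $I$ also satisfies the $A$-constraint and so is independent in $\M$. Combined with Lemma~\ref{independant}, this shows $\M=M(E,\A-\{A\},c|_{\A-\{A\}})$, i.e.\ $A$ is not essential.

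I do not expect a real obstacle here; the only nontrivial point is the pairwise-disjointness of children, which is a one-line consequence of laminarity and the minimality built into the definition of a child. Once that is noted, the rest is a direct counting bound. I would also briefly remark on the degenerate cases (when $\chi(A)=\emptyset$ or $S(A)=\emptyset$) to confirm that the displayed sum is interpreted in the obvious way and the inequality still goes through.
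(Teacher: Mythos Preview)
Your proof is correct and follows essentially the same approach as the paper: both take an independent set of the reduced presentation, use the partition $A = S(A)\sqcup\bigsqcup_{F\in\chi(A)} F$, and bound $|I\cap A|$ by $b(A)\le c(A)$. You are simply more explicit about the pairwise disjointness of children and the invocation of Lemma~\ref{independant}, which the paper leaves implicit.
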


\begin{proof}
Let $I$ be independent in $M(E,\A-\{A\},c|_{\A-\{A\}})$. Then $|I\cap F|\leq c(F)$ for all $F\in \chi(A)$, and $|I\cap S(A)|\leq |S(A)|$. Since the set $S(A)$  together with children of $A$  partitions   $A$, we see that 
$$|I\cap A| =   |I \cap S(A)|~+\sum_{F\in\chi(A)}|I \cap F|\quad\leq\quad  |S(A)|~+\sum_{F\in\chi(A)}c(F)=b(A).$$
Hence $A$ is not essential.
\end{proof}

The last lemma generalizes the following elementary fact about canonical presentations.
We omit the proof.

\begin{corollary} 
\label{obvious} 
If $(E,\A,c)$ is a canonical presentation for $M$, then $|A| > c(A)$ for all $A$ in $\A$.
\end{corollary}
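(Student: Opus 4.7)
The plan is to argue by contraposition: I would show that if $c(A)\ge |A|$ for some $A\in\A$, then $A$ is not essential, contradicting the hypothesis that $(E,\A,c)$ is canonical. This is essentially a one-line observation, so the ``proof proposal'' here is really a short verification together with handling of the loop case.

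First I would treat the non-loop members of $\A$. Fix $A\in\A$ with $A\neq\cl(\emptyset)$ and suppose, for a contradiction, that $c(A)\ge |A|$. For any set $I\subseteq E$ we trivially have $|I\cap A|\le |A|\le c(A)$, so the constraint imposed by $A$ is redundant: the collection
\[
\bigl\{I\subseteq E : |I\cap A'|\le c(A')\text{ for all }A'\in\A-\{A\}\bigr\}
\]
equals $\I$. Thus $M(E,\A,c)=M(E,\A-\{A\},c|_{\A-\{A\}})$, so $A$ is not essential, contradicting the definition of a canonical presentation. Hence $c(A)<|A|$, i.e.\ $|A|>c(A)$. (Alternatively, one can read this off Lemma~\ref{BadParent}: since $S(A)$ together with the children of $A$ partitions $A$ and each $|F|\ge c(F)$ is at least $0$, we have $b(A)\le |A|$, so $c(A)\ge |A|$ forces $c(A)\ge b(A)$ and non-essentiality; but the direct bound is cleaner.)

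Next I would dispose of the potential loop-set member $A_0=\cl(\emptyset)$ that appears in a canonical presentation when $M$ has loops. In that case $c(A_0)=0$ (since every singleton in $A_0$ must be dependent) and, by definition of the canonical presentation in the loop case, $A_0$ is present in $\A$ precisely because $M$ has at least one loop, so $A_0\neq\emptyset$. Therefore $|A_0|\ge 1>0=c(A_0)$, as required.

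There is no real obstacle: the key point is simply that the constraint $|I\cap A|\le c(A)$ is vacuous whenever $c(A)\ge |A|$, and the only subtlety is remembering to verify the inequality separately for the loop-closure set $A_0$ in the canonical presentation of a matroid with loops.
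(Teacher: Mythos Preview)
Your proof is correct. The paper actually omits the proof entirely, noting only that the corollary is an elementary fact generalized by Lemma~\ref{BadParent}. Your direct argument---that the constraint $|I\cap A|\le c(A)$ is vacuous whenever $c(A)\ge|A|$, hence $A$ cannot be essential---is precisely the elementary observation the paper has in mind, and your separate treatment of $A_0=\cl(\emptyset)$ in the loop case is appropriate given how canonical presentations are defined there.

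One minor remark: your parenthetical derivation via Lemma~\ref{BadParent} is slightly garbled (the phrase ``each $|F|\ge c(F)$ is at least $0$'' does not parse, and in fact deducing $b(A)\le|A|$ from that lemma requires an induction on the laminar structure, starting with minimal members of $\A$ where $b(A)=|A|$). Since you correctly flag the direct bound as cleaner and use it as your main argument, this does not affect the validity of your proof.
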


With the goal of showing the uniqueness of  canonical presentations, next we exhibit some relationships between circuits and canonical presentations.
In particular, the next lemma will show that if $M$ has no loops, then  $\cl(C)\in\A$ for each circuit $C$ and $c\left(\cl(C)\right)=|C|-1$.

\begin{lemma}
\label{CircuitBehavior}
Let $C$ be a circuit of $M$. Assume that $(E,\A,c)$ is canonical.  Then 
\begin{enumerate}[label=(\roman*)]
\item $\A$ contains a member $A_C$ of capacity $|C|-1$ such that $C\subseteq A_C$; and
\item if $|C|\geq 2$, then $A_C=\cl(C)-\cl(\emptyset)$.
\end{enumerate}
\end{lemma}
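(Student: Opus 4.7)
My plan is to exploit the minimality built into the definition of a circuit together with laminarity and the essentiality conditions forced by canonicity.

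For part (i), I would begin by analyzing what a set $A\in\A$ witnessing the dependence of $C$ must look like. Since $C$ is dependent, some $A\in\A$ satisfies $|C\cap A|>c(A)$; on the other hand, for each $e\in C$ the set $C-e$ is independent, so $|(C-e)\cap A|\leq c(A)$. Together these force $|C\cap A|=c(A)+1$ and $C\subseteq A$, whence $c(A)=|C|-1$, giving (i). To prepare for (ii) I also need uniqueness: if $A$ and $A'$ both play this role, then $C\subseteq A\cap A'$ is nonempty, laminarity gives (say) $A\subseteq A'$, and proper containment would contradict Lemma~\ref{BadChild} since $c(A)=c(A')=|C|-1$. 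So $A_C$ is unique.

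For part (ii), assume $|C|\geq 2$. I would first verify that $A_C$ is disjoint from $\cl(\emptyset)$. In the canonical presentation of a matroid with loops, the laminar family is $\A'\cup\{\cl(\emptyset)\}$ with $\A'$ contained in $E-\cl(\emptyset)$; since $C$ is a circuit of size at least two it contains no loops, so $A_C=\cl(\emptyset)$ is impossible, forcing $A_C\in\A'$ and $A_C\subseteq E-\cl(\emptyset)$. Next, a rank argument gives $A_C\subseteq\cl(C)$: every independent subset of $A_C$ has size at most $c(A_C)=|C|-1$, so $r(A_C)\leq|C|-1$; since $C\subseteq A_C$ and $r(C)=|C|-1$, equality $r(A_C)=r(C)$ holds, so $C$ spans $A_C$. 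This gives $A_C\subseteq\cl(C)-\cl(\emptyset)$.

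For the reverse inclusion, fix $x\in\cl(C)-\cl(\emptyset)$. The case $x\in C$ is immediate, so assume $x\notin C$. Since $x\in\cl(C)$, there is a circuit $C^*$ with $x\in C^*\subseteq C\cup\{x\}$; because $x$ is not a loop, $|C^*|\geq 2$, and because $C$ is itself a circuit distinct from $C^*$, we have $C^*\not\supseteq C$, so $|C^*|\leq|C|$. Applying part (i) to $C^*$ produces $A_{C^*}$ with $c(A_{C^*})\leq c(A_C)$. The nonempty set $C^*-x$ lies in $A_C\cap A_{C^*}$, so laminarity yields a containment; $A_C\subsetneqq A_{C^*}$ would contradict Lemma~\ref{BadChild} combined with $c(A_C)\geq c(A_{C^*})$, so $A_{C^*}\subseteq A_C$ and hence $x\in A_C$. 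I expect the main obstacle to be the bookkeeping for the loop case: one must notice that in a canonical presentation the loops are collected into the single distinguished member $\cl(\emptyset)$, so the witness $A_C$ for a circuit of size at least two is automatically loop-free, which is what legitimizes the rank computation used above.
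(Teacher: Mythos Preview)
Your proof is correct and follows essentially the same strategy as the paper's: part~(i) is argued identically, and for the inclusion $\cl(C)-\cl(\emptyset)\subseteq A_C$ in part~(ii) both you and the paper pick a circuit through the new element, apply~(i), and invoke laminarity together with Lemma~\ref{BadChild}. The only real difference is in the reverse inclusion $A_C\subseteq\cl(C)$: the paper shows directly that $(C-e)\cup f$ is dependent for any $f\in A_C-C$ and any $e\in C$, whereas you use the cleaner rank observation $r(A_C)\le c(A_C)=|C|-1=r(C)$, so $C$ spans $A_C$; your explicit uniqueness argument for $A_C$ is also a small addition the paper leaves implicit.
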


\begin{proof}
Part (i) holds if $|C|=1$. Assume that $|C|\geq 2$, and that $e\in C$. Then, since $C$ is dependent, but $C-e$ is independent, we must have $e\in A$ for some $A\in \A$ where $|(C-e)\cap A|\leq c(A) $, but $|C\cap A|> c(A)$. Then $c(A)=|(C-e)\cap A|$. Now, $C\cap A$ is dependent, since $|C\cap A|> c(A)$. Thus $C\cap A=C$, so $C\subseteq A$ and $c(A)=|C|-1$. Hence (i) holds.

To prove (ii), assume that $|C|\geq 2$.
Let $f$ be an element of $\cl(C)-\cl(\emptyset)$. By (i), $C \subseteq A_C$. 
Suppose  $f\in \cl(C)-C$. Then there is some circuit $D$ with $f\in D\subseteq C\cup f$. Then, by (i), $D\subseteq A_D\in \A$ and $|D|-1=c(A_D)$. Since $f$ is not a loop, $C\cap D$, and hence $A_C\cap A_D$, is non-empty. As $\A$ is a laminar family, this implies that $A_C\subseteq A_D$, or $A_D\subseteq A_C$. But, since $c(A_D)=|D|-1\leq |C|-1 = c(A_C)$, we deduce, from Lemma~\ref{BadChild}, that $A_D\subseteq A_C$. Thus $f\in A_C$ as desired. Hence $\cl(C)-\cl(\emptyset)\subseteq A_C$.

Now, suppose that $f\in A_C-C$. Since $f \in A_C$, by the definition of a canonical presentation, $f \not\in \cl(\emptyset)$. 
 Arbitrarily choose an element $e$ of $C$. Then, since $|(C-e)\cup f|=|C|> c(A_C)$ and $(C-e)\cup f\subseteq A_C$, we have that $(C-e)\cup f$ is dependent, so $f\in \cl(C)$. 
\end{proof}

This lemma has the following consequence.
\begin{corollary}
\label{NiceCondition}
If $C$ and $D$ are intersecting circuits of an arbitrary matroid $N$ such that $\cl(C)\not\subseteq \cl(D)$ and $\cl(D)\not\subseteq \cl(C)$, then $N$ is not laminar.
\end{corollary}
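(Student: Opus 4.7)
The plan is to argue by contrapositive: assume $N$ is laminar and derive that $\cl(C)$ and $\cl(D)$ must be comparable under inclusion. Since $N$ is laminar, it has some presentation, and by iteratively discarding non-essential members of the laminar family (which is permissible by Lemma~\ref{independant} and the definition of essential), we may assume that $N = M(E,\A,c)$ for a canonical presentation $(E,\A,c)$.

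Next I would observe that the hypothesis forces $|C|,|D|\geq 2$: if $C$ were a loop $\{e\}$, then $\cl(C)=\cl(\emptyset)\subseteq\cl(D)$ automatically, contradicting $\cl(C)\not\subseteq\cl(D)$, and symmetrically for $D$. With $|C|,|D|\geq 2$, Lemma~\ref{CircuitBehavior} supplies sets $A_C,A_D\in\A$ with $C\subseteq A_C$, $D\subseteq A_D$, and moreover
$$A_C=\cl(C)-\cl(\emptyset),\qquad A_D=\cl(D)-\cl(\emptyset).$$

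Because $C\cap D\neq\emptyset$ and neither $C$ nor $D$ contains a loop (circuits of size at least $2$ are loop-free), $A_C\cap A_D$ is nonempty. The laminarity of $\A$ then gives $A_C\subseteq A_D$ or $A_D\subseteq A_C$. Reattaching the loops in $\cl(\emptyset)$ to both sides, this is exactly $\cl(C)\subseteq\cl(D)$ or $\cl(D)\subseteq\cl(C)$, contradicting the hypothesis. Hence $N$ cannot be laminar.

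The argument is essentially bookkeeping once Lemma~\ref{CircuitBehavior} is in hand, so there is no real obstacle; the only subtlety is the initial reduction to a canonical presentation and the observation that the hypothesis rules out loops, which is what allows the circuit closures to be tracked inside $\A$ rather than lost into $\cl(\emptyset)$.
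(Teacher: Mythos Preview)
Your proof is correct and follows essentially the same approach as the paper: assume $N$ is laminar, pass to a canonical presentation, note that neither circuit is a loop, and use Lemma~\ref{CircuitBehavior} to obtain $A_C$ and $A_D$ in $\A$ whose nonempty intersection forces comparability. You are simply more explicit than the paper about why $C\cap D$ avoids $\cl(\emptyset)$ and how comparability of $A_C,A_D$ translates back to comparability of $\cl(C),\cl(D)$.
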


\begin{proof}
Clearly neither $C$ nor $D$ is a loop. 
Assume that $N$ is laminar and let $(E,\A,c)$ be a canonical presentation of $N$. Since $C$ meets $D$, we deduce that  $A_C$ meets $A_D$. But neither is a subset of the other.
\end{proof}

\begin{theorem}
\label{Uniqueness}
A laminar matroid $M$  has a unique canonical presentation. Indeed, when $M$ is loopless, $\A =   \{\cl(C):C\text{ is a circuit of $M$}\}$ and $c(\cl(C)) = r(C) =  |C| - 1$. 
\end{theorem}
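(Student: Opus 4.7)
The plan is to reduce to the loopless case and then use Lemma~\ref{CircuitBehavior} in both directions. The loop reduction is built into the definition: a canonical presentation of $M$ is, by definition, a canonical presentation of $M\backslash \cl(\emptyset)$ together with the extra set $\cl(\emptyset)$, so uniqueness for loopless matroids implies uniqueness in general. I therefore assume $M$ is loopless.

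One inclusion is essentially supplied by Lemma~\ref{CircuitBehavior}: for every circuit $C$ of $M$, we have $\cl(C) \in \A$ and $c(\cl(C)) = |C|-1$. Hence $\{\cl(C) : C \text{ is a circuit}\} \subseteq \A$ in any canonical $\A$, and the values of $c$ on these sets are forced. The substantive content is therefore the reverse inclusion $\A \subseteq \{\cl(C) : C \text{ is a circuit}\}$.

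To prove this, fix $A \in \A$ and exploit essentiality: there is a set $I$ that is independent in $M(E, \A - \{A\}, c|_{\A-\{A\}})$ but dependent in $M$. Choose a circuit $C \subseteq I$. Since every other capacity constraint is still satisfied by $C$, the only one $C$ can violate is the one at $A$, giving $|C \cap A| \geq c(A) + 1$; circuit-minimality forces equality. Every proper subset of $C \cap A$ is a proper subset of $C$, hence independent, so $C \cap A$ is itself a circuit, and after replacing $C$ by $C \cap A$ I may assume $C \subseteq A$ with $|C| = c(A) + 1$. Lemma~\ref{CircuitBehavior} now produces $A_C = \cl(C) \in \A$ with $c(A_C) = c(A)$. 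Both $A$ and $A_C$ contain the nonempty set $C$, so laminarity gives $A \subseteq A_C$ or $A_C \subseteq A$. Since both sets are essential (the presentation is canonical) and carry the same capacity, Lemma~\ref{BadChild} prohibits strict inclusion in either direction, forcing $A = A_C = \cl(C)$.

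The step I expect to be trickiest is the quiet observation that $C \cap A$ is actually a circuit, rather than merely a dependent subset of $C$; this is what lets me apply Lemma~\ref{CircuitBehavior} to a set sitting inside $A$ and match capacities exactly, so that essentiality and laminarity together pin down $A$ uniquely as a circuit-closure.
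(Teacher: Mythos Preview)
Your argument is correct, and it is genuinely different from (and cleaner than) the route taken in the paper. The paper proves the reverse inclusion $\A \subseteq \{\cl(C): C\text{ a circuit}\}$ by way of Lemma~\ref{*}, which picks a \emph{maximum-sized} circuit $C \subseteq A$ and then runs a minimal-counterexample argument through the children of $A$, eventually invoking Lemma~\ref{BadParent} and Corollary~\ref{NiceCondition} to reach a contradiction. Your approach instead uses essentiality of $A$ directly to manufacture a circuit $C \subseteq A$ with $|C| = c(A)+1$; then the equal-capacity case of Lemma~\ref{BadChild} together with laminarity immediately forces $A = A_C$. This sidesteps the child analysis and Lemma~\ref{BadParent} entirely.

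The step you flagged is indeed the pivotal one, and it is fine: $C \cap A$ is dependent because $|(C\cap A)\cap A| > c(A)$, while every proper subset of $C\cap A$ is a proper subset of the circuit $C$ and hence independent; so $C\cap A$ is a circuit, and being contained in the circuit $C$ it equals $C$. (Your phrase ``replacing $C$ by $C\cap A$'' is harmless redundancy, since they already coincide.) One small point worth making explicit for a clean write-up: looplessness guarantees $c(A)\ge 1$ (else $A$ would contain a loop), so $|C|\ge 2$ and part~(ii) of Lemma~\ref{CircuitBehavior} applies to identify $A_C$ with $\cl(C)$.
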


The core of the proof of this theorem is contained in the next result. 

\begin{lemma}
\label{*}
Let $(E,\A,c)$ be a canonical  presentation for a loopless laminar matroid $M$. If $A \in \A$, then $A$ is dependent. Moreover, if $C$ is a maximum-sized circuit contained in $A$, then $A_C = A$ so $c(A)=|C|-1$.
\end{lemma}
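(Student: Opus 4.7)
The plan is to prove both assertions together by induction on $|\{A' \in \A : A' \subseteq A\}|$. The first assertion is a one-line observation: every $M$-independent subset $I$ of $A$ satisfies $|I| = |I \cap A| \leq c(A)$, so by Corollary~\ref{obvious}, $r(A) \leq c(A) < |A|$, whence $A$ is dependent. For the second assertion, the easy upper bound $|C| \leq c(A) + 1$ follows because $C - e$ is independent for any $e \in C$, giving $|C| - 1 = |(C - e) \cap A| \leq c(A)$.

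The heart of the argument is producing a matching lower bound by exhibiting an explicit circuit of size $c(A) + 1$ inside $A$. By the inductive hypothesis applied to each child $F_j \in \chi(A)$, the maximum circuit in $F_j$ has size $c(F_j) + 1$, which together with the constraint at $F_j$ forces $r(F_j) = c(F_j)$; let $B_j$ be an $M$-basis of $F_j$. Form $T = S(A) \cup \bigcup_{j} B_j$; then $|T| = b(A) > c(A)$ by Lemma~\ref{BadParent} applied to the essential $A$. I then claim that every subset $J \subseteq T$ of size $c(A) + 1$ is a circuit of $M$. Verifying this claim is the main technical step and reduces to a case analysis over $A'' \in \A$: constraints at $A''$ with $A'' \subseteq F_j$ are satisfied because $J \cap A'' \subseteq B_j \cap A''$ is independent in $M|F_j$; constraints at $A''$ with $A'' \supsetneqq A$ are satisfied because Lemma~\ref{BadChild} together with the integrality of $c$ gives $c(A'') \geq c(A) + 1 = |J|$; constraints at $A''$ disjoint from $A$ are trivial; and the only violated constraint is at $A$ itself, where $|J \cap A| = c(A) + 1$. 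The same analysis applied to a proper subset of $J$ (which has size at most $c(A)$) shows it is independent, so $J$ is a circuit, yielding $|C| \geq c(A) + 1$ and hence $c(A) = |C| - 1$.

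Finally, to deduce $A_C = A$, Lemma~\ref{CircuitBehavior}(i) supplies some $A_C \in \A$ with $C \subseteq A_C$ and $c(A_C) = |C| - 1 = c(A)$. Since $A \cap A_C \supseteq C \neq \emptyset$, laminarity forces $A$ and $A_C$ to be nested, and any proper containment between two essential sets of equal capacity would contradict Lemma~\ref{BadChild}; hence $A_C = A$. The main obstacle throughout is the case-analytic verification that an arbitrary $(c(A)+1)$-subset of the specific set $T$ is a circuit rather than merely a dependent set, which is what synchronises the inductive hypothesis with the structure of $M$ near $A$.
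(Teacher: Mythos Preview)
Your proof is correct and takes a genuinely cleaner route than the paper's. Both arguments share the same skeleton: establish dependence of $A$ via Corollary~\ref{obvious}, use the inductive (or minimal-counterexample) hypothesis on the children $F_j$ to get $r(F_j)=c(F_j)$, and assemble the set $T=S(A)\cup\bigcup_j B_j$ of size $b(A)>c(A)$. At this point the paper picks a $(c(A)+1)$-subset $X\subseteq T$, extracts a circuit $Z\subseteq X$, and then invokes Corollary~\ref{NiceCondition} (the closure--laminarity of circuits) to show that $Z$ cannot cross between children or into $S(A)$, reaching a contradiction. You instead verify directly, by checking every constraint $A''\in\A$, that any $(c(A)+1)$-subset $J$ of $T$ is already a circuit; this bypasses Corollary~\ref{NiceCondition} entirely and keeps the argument at the level of the capacity inequalities. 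Your case analysis is sound (in particular $T\cap F_j=B_j$ because the children are pairwise disjoint and disjoint from $S(A)$, so $J\cap A''\subseteq B_j\cap A''$ whenever $A''\subseteq F_j$), and the deduction $A_C=A$ from equal capacities and Lemma~\ref{BadChild} is exactly right. The payoff of your approach is a self-contained, more constructive proof; the paper's version has the minor advantage of linking the lemma back to the circuit-closure picture that drives Theorem~\ref{Converse}.
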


\begin{proof}
By Corollary~\ref{obvious}, $c(A) < |A|$. Thus $A$ is dependent. Now choose $A$ to be a minimal counterexample to \ref{*}. As $A \cap A_C \supseteq C$, either $A \subsetneqq A_C$ or $A_C \subsetneqq A$. In the first case, by Lemma~\ref{BadChild}, $c(A) \le c(A_C) - 1 = |C| - 2$. Hence $A$ cannot contain an independent set of size $|C| - 1$. This is a contradiction since $C \subseteq A$. Thus $A_C \subsetneqq A$. Now 
 $A$ has a child $A'$ such that $A_C\subseteq A'\subsetneqq A$. The  choice of $A$ implies that $A'=A_C$.

Let $A_1, A_2, \dots , A_n$ be the children of $A$ other than $A_C$ and write $A_0$ for $A_C$. Then, for each $i$, our choice of $A$ means  that $c(A_i)=|C_i|-1$, where $C_i$ is a maximum-sized circuit contained in $A_i$. Arbitrarily choose $e_i$ in $C_i$. Then   $C_i-e_i$ is a basis for $A_i$. Clearly $S(A)=A-(A_0\cup A_1\cup \dots \cup   A_n)$. %I don't like the astedics of the A_{n-1}.

By Lemma~\ref{BadParent}, $|S(A)| + \sum_{i=0}^{n}c(A_i) =b(A)\ge c(A)+1$. Now 
$|\bigcup_{i=0}^{n}(C_i-e_i)\cup S(A)| = b(A)$ so $\bigcup_{i=0}^{n}(C_i-e_i)\cup S(A)$ contains a subset $X$  such that $|X| = c(A) +1$. As $|X \cap A| = |X| > c(A)$, we see that $X$ is dependent. Thus $X$ contains a circuit $Z$ and $\cl(Z) = A_Z$. Then $c(A_Z) = |Z| - 1 \le |X| - 1 = c(A).$ As $A_Z$ and $A$ meet, it follows by Lemma~\ref{BadChild} that $A_Z\subseteq A$. Now $Z \not \subseteq S(A)$, otherwise $A_Z$ is a proper subset of $A$ that is in $\A$ but is not contained in a child of $A$. Thus either $Z$ meets $C_i$ and $C_j$ for some distinct $i$ and $j$, or $Z$ meets $C_i$ and $S(A)$. In each case, 
by Corollary~\ref{NiceCondition}, $\cl(C_i) \subseteq \cl(Z)$, or $\cl(Z) \subseteq \cl(C_i)$. If $Z$ meets $S(A)$, then $Z \not \subseteq \cl(C_i)$ so $\cl(C_i) \subsetneqq \cl(Z)$. 
The last inclusion also holds if $Z$ meets $C_j$ since $\cl(C_i)$ and $\cl(C_j)$ are disjoint.  
As $\cl(C_i)$ is a child of $A$, and $\cl(C_i) \subseteq \cl(Z) \subsetneqq A$, it follows that $\cl(Z) = \cl(C_i)$; a contradiction.
\end{proof}

\begin{proof}[Proof of Theorem~\ref{Uniqueness}.] 
 It suffices to prove the result for loopless matroids. Suppose that $(E,\A,c)$ is a canonical presentation for $M$, and let $M$ be loopless. Then $\A\supseteq \{A_C:C\text{ is a circuit of $M$}\}$. Now take $A$ in $\A$. Then, by Lemma~\ref{*}, $A = A_C$ where $C$ is a maximum-sized circuit contained in $A$. Thus the theorem holds.
 \end{proof}

We omit the proof of the following elementary result (see, for example, \cite[Exercise 1.1.5]{James}).

\begin{lemma}
\label{TwoCircuitClosure}
Let $N$ be a matroid, $C$ be a circuit of $N$, and $e$ be a non-loop element of $\cl(C)-C$. Then $N$ has circuits $D$ and $D'$  such that $e\in D\cap D'$ and $(D\cup D')-e=C$.
\end{lemma}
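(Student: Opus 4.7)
The plan is to apply weak circuit elimination twice. The key preliminary observation is that $C\cup e$ has rank $|C|-1$ (since $e\in\cl(C)$) and size $|C|+1$, so it is dependent; moreover, any circuit of $N$ contained in $C\cup e$ other than $C$ must contain $e$, because the only circuit contained in the circuit $C$ is $C$ itself.

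First, I would exhibit one circuit $D$ with $e\in D\subseteq C\cup e$ by picking any $c\in C$: then $C-c$ is independent but $(C-c)\cup e$ is dependent, since $e\in\cl(C)=\cl(C-c)$, and any circuit it contains must contain $e$. Next I would apply weak circuit elimination to the distinct circuits $D$ and $C$ at any $g\in D\cap C=D-e$ (nonempty since $e$ is a non-loop, so $|D|\geq 2$), producing a circuit $D'\subseteq (D\cup C)-g\subseteq (C\cup e)-g$. Because $g\in C-D'$, we have $D'\neq C$, so by the preliminary observation $e\in D'$; and since $g\in D-D'$, also $D'\neq D$.

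Finally, I would apply weak circuit elimination once more, this time to the distinct circuits $D$ and $D'$ at their common element $e$. This produces a circuit $C''\subseteq(D\cup D')-e\subseteq C$, and minimality of $C$ forces $C''=C$. Thus $C\subseteq(D\cup D')-e$, and the reverse inclusion is immediate from $D\cup D'\subseteq C\cup e$, so $(D\cup D')-e=C$, as required.

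The main obstacle, I expect, is spotting the clean two-step use of circuit elimination — first to build a second circuit through $e$ distinct from $D$ by eliminating with $C$, and then to recover $C$ itself as the circuit produced by eliminating $e$ from $D$ and $D'$. Once that pattern is recognized, all remaining verifications are routine.
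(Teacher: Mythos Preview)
Your proof is correct. The paper actually omits the proof of this lemma, citing it as a standard exercise (see \cite[Exercise~1.1.5]{James}), so there is no argument to compare against; your two-step use of weak circuit elimination is exactly the intended approach.
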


Next we prove our first main result.

\begin{proof}[Proof of Theorem~\ref{Converse}.]
By Corollary~\ref{NiceCondition}, if $C$ and $D$ are  intersecting circuits in a  laminar matroid, then $\cl(D) \subseteq \cl(C)$ or $\cl(C) \subseteq \cl(D)$. 
 To prove the converse,  let $N$ be a matroid in which, for every two intersecting circuits, the closure of one is contained in the closure of the other. We may assume that $N$ is 
 loopless. Let $\A'=\{\cl(C):C\in\C(N)\}.$

Suppose $A_1,A_2\in\A'$ and $A_1\cap A_2\not=\emptyset$. Let $C_1$ and $C_2$ be  circuits so that $\cl(C_i)=A_i$ for each $i$ in $\{1,2\}$. If $C_1\cap C_2\not=\emptyset$, then, by the given condition, $A_1\subseteq A_2$ or $A_2\subseteq A_1$. Now suppose that $C_1\cap C_2=\emptyset$ and $e\in \cl(C_1)\cap \cl(C_2)$. Since $e$ is not a loop, Lemma~\ref{TwoCircuitClosure} implies that, for each $i$ in $\{1,2\}$, there are circuits $D_i$ and $D_i'$ of $M$ such that $e  \in D_i  \cap D_i'$ and $D_i \cup D_i' = C_i \cup e$. Now $e \in D_1\cap D_2$  so our hypothesis implies, without loss of generality, that $\cl(D_1)\subseteq \cl(D_2)$. 

If $\cl(D_1')$ is contained in either $\cl(D_2)$ or $\cl(D_2')$, then $C_1$ and hence $\cl(C_1)$ is contained in $\cl(C_2)$. But otherwise,  both $\cl(D_2)$ and $\cl(D_2')$ are subsets of $\cl(D_1')$, so $\cl(C_2)\subseteq \cl(C_1)$. We conclude that $\A'$ is a laminar family.  

For each $A$ in $\A'$, let $c'(A)=r_N(A)$, and let $N'=M(E,\A',c')$. % where $(E,\A',c')$ is the canonical presentation for $N'$. 
We shall show that every circuit of $N$ is dependent in $N'$, and every   circuit of $N'$ is dependent in $N$. From this, it will follow immediately that $N = N'$ (see, for example, \cite[Lemma 2.1.22]{James}). 
Suppose $C$ is a circuit of $N$. Then $|C\cap \cl(C)|=|C|> r_N(C) = c'(\cl(C))$, so $C$ is dependent in $N'$. Now let $D$ be a circuit of $N'$. Then $\A'$ contains a set $A'$ such that $c'(A') < |D \cap A'|$. But, for all $d$ in $D$, as $D- d$ is independent in $N'$, it follows that $|(D- d) \cap A'| \le c'(A')$. Hence $D \subseteq A'$ and $c'(A') < |D|$. But $c'(A') = r_N(\cl(C'))$ for some circuit $C'$ of $N$. Thus $D$ is dependent in $N$ otherwise $c'(A') \ge |D|$; a contradiction. 
We conclude that $N = N'$, so $N$ is laminar and the theorem holds.
\end{proof}

The next two results are immediate consequences of Theorem~\ref{Converse}. 

\begin{corollary}
\label{OnlyCheckNonSpanningCircuit}
A matroid   is laminar if and only if, for every pair $C_1, C_2$ of non-spanning circuits with $C_1\cap C_2\not=\emptyset$, either $\cl(C_1)\subseteq \cl(C_2)$ or $\cl(C_2)\subseteq \cl(C_1)$.
\end{corollary}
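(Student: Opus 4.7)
The plan is to deduce this corollary directly from Theorem~\ref{Converse}, which gives the full closure-containment characterization of laminarity using all intersecting circuit pairs. The only difference between the two statements is that here we restrict attention to non-spanning circuits, so the task is to show that strengthening the hypothesis to include spanning circuits is automatic.

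The forward direction is immediate: if $M$ is laminar, then by Theorem~\ref{Converse} the closure-containment condition holds for every pair of intersecting circuits of $M$, and in particular for every pair of intersecting non-spanning circuits. For the converse, let $M$ be a matroid satisfying the hypothesis of the corollary, and let $C_1$ and $C_2$ be any two intersecting circuits of $M$. I would split into cases according to whether either circuit is spanning. If neither $C_1$ nor $C_2$ is spanning, then the hypothesis of the corollary applies directly and yields $\cl(C_1) \subseteq \cl(C_2)$ or $\cl(C_2) \subseteq \cl(C_1)$. If, on the other hand, at least one of them, say $C_1$, is spanning, then $r(C_1) = r(M)$, so $\cl(C_1) = E(M)$, and hence $\cl(C_2) \subseteq \cl(C_1)$ holds trivially. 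Combining these cases shows that $M$ satisfies the condition in Theorem~\ref{Converse}, and so $M$ is laminar.

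There is essentially no obstacle here; the key observation is simply that a spanning circuit has closure equal to the entire ground set, which makes the containment condition automatic whenever one of the two intersecting circuits is spanning. Consequently, checking the condition on non-spanning circuit pairs is equivalent to checking it on all pairs, and the corollary follows.
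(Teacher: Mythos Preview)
Your proposal is correct and matches the paper's approach: the paper simply declares this corollary to be an immediate consequence of Theorem~\ref{Converse}, and your argument spells out exactly why---namely, that a spanning circuit has closure equal to $E(M)$, so the containment is automatic whenever one of the two circuits is spanning.
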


\begin{corollary}
\label{AtMostOneNonSpanningCircuit}
Every matroid  with at most one non-spanning circuit is laminar.            
\end{corollary}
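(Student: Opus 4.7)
The plan is to derive this directly from Corollary~\ref{OnlyCheckNonSpanningCircuit}, which reduces the laminar condition to a statement about pairs of non-spanning circuits that share an element. So the heavy lifting has already been done, and what remains is essentially a counting observation.

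First I would consider the two cases based on the number of non-spanning circuits. If the matroid has no non-spanning circuits, then the hypothesis of Corollary~\ref{OnlyCheckNonSpanningCircuit} is vacuously satisfied, so the matroid is laminar. If there is exactly one non-spanning circuit $C$, then the only candidate pair $(C_1,C_2)$ of non-spanning circuits with $C_1\cap C_2\neq\emptyset$ is the pair $(C,C)$, for which the required inclusion $\cl(C)\subseteq\cl(C)$ holds trivially.

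In either case, the hypothesis of Corollary~\ref{OnlyCheckNonSpanningCircuit} is met, so the matroid is laminar.

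There is no real obstacle here; the only point that one might double-check is whether the statement in Corollary~\ref{OnlyCheckNonSpanningCircuit} is meant to include the case $C_1=C_2$, but since the conclusion holds trivially in that case, the argument works regardless of interpretation.
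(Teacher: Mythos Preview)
Your proof is correct and matches the paper's approach: the paper simply records this corollary (together with Corollary~\ref{OnlyCheckNonSpanningCircuit}) as an immediate consequence of Theorem~\ref{Converse}, and you have just written out the trivial verification explicitly.
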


Our third corollary of Theorem~\ref{Converse} requires some more proof. 

\begin{corollary}
\label{conn}
Let $M$ be a loopless laminar matroid and $(E,\A,c)$ be its canonical presentation. Suppose $|E| \ge 2$. Then the following are equivalent.
\begin{itemize}
\item[(i)] $M$ is connected;
\item[(ii)] $E \in \A$; and
\item[(iii)] $M$ has a spanning circuit.
\end{itemize} 
\end{corollary}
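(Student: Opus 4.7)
The plan is to prove the equivalence by establishing $(\mathrm{ii})\Leftrightarrow(\mathrm{iii})$ directly from Theorem~\ref{Uniqueness}, then $(\mathrm{iii})\Rightarrow(\mathrm{i})$ by a standard connectivity argument, and finally $(\mathrm{i})\Rightarrow(\mathrm{ii})$ by contrapositive using the laminar structure of $\A$.

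The equivalence of (ii) and (iii) is essentially a restatement of Theorem~\ref{Uniqueness}, which gives $\A=\{\cl(C):C\text{ is a circuit of }M\}$ with $c(\cl(C))=|C|-1$. Thus $E\in\A$ iff $E=\cl(C)$ for some circuit $C$, iff $r(C)=r(M)$, iff $C$ is spanning. No real work is needed here.

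For $(\mathrm{iii})\Rightarrow(\mathrm{i})$, given a spanning circuit $C$, any two elements of $C$ lie in the common circuit $C$ and hence in the same connected component. For each remaining $e\in E\setminus C$, we have $e\in\cl(C)\setminus C$; since $e$ is not a loop, Lemma~\ref{TwoCircuitClosure} produces a circuit containing $e$ that meets $C$, placing $e$ in the component of $C$. Hence $M$ is connected.

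The substantive step, which I expect to be the main obstacle, is $(\mathrm{i})\Rightarrow(\mathrm{ii})$, which I would handle by the contrapositive. Assume $E\notin\A$, and let $A_1,\dots,A_k$ be the maximal members of $\A$; laminarity makes these pairwise disjoint. By Theorem~\ref{Uniqueness}, the closure of every circuit of $M$ lies in $\A$, and so is contained in some $A_i$ by maximality. Two consequences follow. First, any element of $E\setminus\bigcup_i A_i$ lies in no circuit and is therefore a coloop; since $|E|\ge 2$, the existence of such an element produces a $1$-separation, so $M$ is disconnected. Second, if $\bigcup_i A_i=E$ but $k\ge 2$, then no circuit can simultaneously meet distinct $A_i$ and $A_j$ (its closure lies in one of them), so the $A_i$ partition $E$ into at least two blocks of the connectivity partition, and again $M$ is disconnected. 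The only remaining case, $\bigcup_i A_i=E$ with $k=1$, forces $A_1=E\in\A$, contradicting our assumption. The delicate point to verify is that each $A_i$ is a union of connected components, but this reduces to the one-line observation that a circuit meeting $A_i$ has its $\A$-closure contained in $A_i$ by laminarity, and then chaining circuits along shared elements stays inside $A_i$.
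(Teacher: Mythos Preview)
Your proof is correct and follows the same overall route as the paper: cycle through the implications, and for $(\mathrm{i})\Rightarrow(\mathrm{ii})$ examine the maximal members of $\A$. The only difference worth noting is in that last step: the paper assumes $M$ connected, picks a circuit $D$ meeting two maximal members $F_1,F_2$, and uses Corollary~\ref{NiceCondition} to force $\cl(D)\supseteq F_1\cup F_2$, contradicting maximality; you instead apply Theorem~\ref{Uniqueness} at the outset to get $\cl(C)\in\A$ for every circuit $C$, so each circuit is already confined to a single maximal $A_i$ and disconnection follows immediately. Your version is a slight streamlining of the same idea.
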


\begin{proof} 
Since $M$ is loopless, clearly (iii) implies (i). 
Suppose  $E \in \A$. Then, by Lemmas~\ref{CircuitBehavior} and \ref{*}, as $M$ is loopless, $E = A_C = \cl(C)$  where $C$ is a maximum-sized circuit of $M$. Hence (ii) implies (iii).

Finally, suppose $M$ is connected   but $E \not\in \A$. If $M$ has an element $e$ that is in no member of $\A$, then  $e$ is a coloop of $M$; a contradiction.  Thus, if $F_1,F_2,\dots,F_k$ are the maximal members of $\A$, then $F_1 \cup F_2 \cup \dots \cup F_k = E$ We shall show that $k = 1$. Assume  $k >1$. For each $i$ in $\{1,2,\dots,k\}$, let $C_i$ be a maximum-sized circuit contained in $F_i$. Then, by Lemmas~\ref{CircuitBehavior} and \ref{*}, $F_i = \cl(C_i)$. As $M$ is connected, it has a circuit $D$ meeting $C_1$ and $C_2$. By Corollary~\ref{NiceCondition}, either $\cl(D) \subseteq \cl(C_i)$ for  some $i$ in $\{1,2\}$, or $\cl(D)$ contains both $F_1$ and $F_2$. Since $F_1$ and $F_2$ are disjoint, the latter holds. By Theorem~\ref{Uniqueness}, $A_D \in \A$, and $A_D$ contains $F_i$ and $F_2$; a contradiction. Hence $E \in \A$. We conclude that (i) implies (ii), so the corollary holds. 
\end{proof}

The next result follows immediately from the last result and Theorem~\ref{Uniqueness}.

\begin{corollary}
\label{ConnectedFlats}
Let $M$ be a loopless laminar matroid and $(E,\A,c)$ be its canonical presentation. Then the members of $\A$ are connected flats of $M$.
\end{corollary}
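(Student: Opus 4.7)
The plan is to combine Theorem~\ref{Uniqueness}, which tells us exactly what members of $\A$ look like, with Corollary~\ref{conn} applied to each restriction $M|A$.

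By Theorem~\ref{Uniqueness}, every $A\in\A$ has the form $A=\cl_M(C)$ for some circuit $C$ of $M$, so in particular every $A\in\A$ is a flat of $M$. It remains only to verify that $M|A$ is connected for each such $A$.

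Fix $A\in\A$ with $A=\cl_M(C)$, and set $M'=M|A$. Since $M$ is loopless, so is $M'$. I would first observe that $M'$ is laminar with presentation $(A,\A_A,c|_{\A_A})$, where $\A_A=\{B\in\A:B\subseteq A\}$; the constraint $|I\cap B|\le c(B)$ coming from any $B\in\A$ with $A\subsetneqq B$ is automatic for subsets of $A$ (since then $|I\cap B|=|I|\le r_M(A)=c(A)<c(B)$, using Lemma~\ref{BadChild} to get the last strict inequality, or simply the fact that the class of laminar matroids is closed under restriction).

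The key step is to verify that $C$ is a spanning circuit of $M'$: it is a circuit of $M'$ because $C\subseteq A$ is a circuit of $M$, and it spans $M'$ because
\[
r_{M'}(C)=r_M(C)=|C|-1=r_M(\cl_M(C))=r_M(A)=r_{M'}(A).
\]
Since $M$ is loopless, $|C|\ge 2$, so $|A|\ge 2$ and Corollary~\ref{conn} applies to $M'$. The implication \3\ $\Rightarrow$ \1\ then gives that $M'$ is connected, which is exactly the statement that $A$ is a connected flat of $M$.

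There is no real obstacle: once one identifies $C$ as a spanning circuit of $M|A$, the conclusion is handed to us by Corollary~\ref{conn}. The only housekeeping is to confirm that $M|A$ is itself a (loopless) laminar matroid so that Corollary~\ref{conn} may be invoked, and this is essentially a one-line check on the induced laminar family.
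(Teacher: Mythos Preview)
Your proof is correct and follows essentially the same approach as the paper, which simply asserts that the result follows immediately from Corollary~\ref{conn} and Theorem~\ref{Uniqueness}. Your housekeeping about $M|A$ being laminar is strictly unnecessary, since the implication \3\ $\Rightarrow$ \1\ in Corollary~\ref{conn} is the general fact that a loopless matroid with a spanning circuit is connected; but it does no harm.
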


\begin{corollary}
\label{ConnectedFlats2}
In a laminar matroid with $(E,\A,c)$ as its canonical presentation, if $F$ is a connected flat of $M$ with $|F|\geq 2$, then $F\in\A$. 
\end{corollary}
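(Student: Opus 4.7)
The plan is to apply Corollary~\ref{conn} to the restriction $M|F$ and then invoke Theorem~\ref{Uniqueness}. First I would note that, since $F$ is a connected flat of $M$ with $|F| \ge 2$, the matroid $M|F$ is connected on at least two elements and hence loopless (a loop is a separator).

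Next I would verify that $M|F$ is itself a laminar matroid. This is the one step where one needs that the class is minor-closed, but for deletion it is essentially immediate from the definition: one checks that $M|F = M(F,\A|F,c')$, where $\A|F = \{A\cap F : A\in\A\}$ is again laminar and $c'(B) = \min\{c(A) : A\in\A,\; A\cap F = B\}$. This is the only place where any real verification is required, and it is a routine unwinding of the definition.

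With $M|F$ a loopless connected laminar matroid on at least two elements, Corollary~\ref{conn} supplies a spanning circuit $C$ of $M|F$. Then $C$ is a circuit of $M$ contained in $F$, and $\cl_{M|F}(C) = F$, which gives $F \subseteq \cl_M(C)$. Since $F$ is a flat of $M$ containing $C$, we also have $\cl_M(C) \subseteq F$, so $\cl_M(C) = F$. By Theorem~\ref{Uniqueness}, $\cl_M(C) \in \A$, and thus $F \in \A$, as required.

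The only conceivable obstacle is the invocation of closure of the laminar class under restriction; but since the canonical presentation of $M|F$ is explicit, this is routine and causes no real difficulty.
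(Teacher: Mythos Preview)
Your proof is correct and follows essentially the same route as the paper's: apply Corollary~\ref{conn} to $M|F$ to obtain a spanning circuit $C$, deduce $F=\cl_M(C)$, and invoke Theorem~\ref{Uniqueness}. You are in fact more careful than the paper in explicitly checking that $M|F$ is laminar and loopless before applying Corollary~\ref{conn}, a point the paper leaves implicit.
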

\begin{proof}
Since $F$ is a connected flat of $M$, by Corollary~\ref{conn} $M|F$ has a spanning circuit $C$ in $M|F$. But $C$ is also a circuit of $M$, so 
$F=\cl(C)\in\A$.
\end{proof}

\section{Excluded Minors}
\label{EM}

In this section, we show that  the class of laminar matroids is a minor-closed,  and we prove our excluded-minor characterization.

\begin{lemma}
\label{MinorClosed}
Every minor of a laminar matroid is laminar.
\end{lemma}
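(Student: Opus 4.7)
The plan is to reduce Lemma~\ref{MinorClosed} to showing that the class of laminar matroids is closed under single-element deletion and single-element contraction, since every minor is built by iterating these two operations. Fixing $M = \M$ and $e \in E$, I would exhibit explicit laminar presentations for $M\ba e$ and $M/e$ on the ground set $E-e$, using in both cases the family $\A' = \{A - e : A \in \A\}$. Note $\A'$ is automatically laminar: if $(A_1 - e) \cap (A_2 - e) \ne \emptyset$, then $A_1 \cap A_2 \ne \emptyset$, so one of $A_1, A_2$ is contained in the other, whence the same holds for $A_1 - e$ and $A_2 - e$.

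For the deletion, I would keep the capacities unchanged, setting $c'(A - e) = c(A)$ (taking the minimum if two distinct members of $\A$ collapse to the same subset of $E-e$). Since $|I \cap A| = |I \cap (A - e)|$ for every $I \subseteq E-e$, the independence conditions defining $M(E-e, \A', c')$ match exactly those defining $M\ba e$.

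For the contraction, first I would dispose of the case that $e$ is a loop, where $M/e = M \ba e$ and the previous paragraph applies. Otherwise $\{e\}$ is independent in $M$, which forces $c(A) \ge 1$ for every $A \in \A$ containing $e$. I would then define
\[
c''(A - e) = \begin{cases} c(A) - 1 & \text{if } e \in A, \\ c(A) & \text{if } e \notin A, \end{cases}
\]
which is non-negative by the previous sentence. A subset $I$ of $E-e$ is independent in $M/e$ iff $I \cup \{e\}$ is independent in $M$, i.e., iff $|I \cap A| + [e \in A] \le c(A)$ for all $A \in \A$, which is exactly the condition $|I \cap (A - e)| \le c''(A - e)$. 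Hence $M(E-e, \A', c'') = M/e$.

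There is essentially no substantive obstacle: the heart of the argument is the trivial observation that deleting a fixed element from every member of a laminar family preserves laminarity. The only thing to be careful about is verifying that the adjusted capacity $c''$ stays non-negative for the contraction, which is precisely what non-loopness of $e$ buys. The lemma should therefore follow in a few lines.
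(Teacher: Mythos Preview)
Your proposal is correct and follows essentially the same approach as the paper: exhibit explicit laminar presentations for $M\backslash e$ and $M/e$ on the family $\{A-e : A\in\A\}$, keeping capacities for deletion and decrementing by one on sets through $e$ for contraction. The only cosmetic difference is that the paper starts from the \emph{canonical} presentation and uses the inequality $|A'-A|\ge 2$ for nested members to make $A\mapsto A-e$ injective, whereas you handle possible collisions by taking the minimum capacity (a convention you should also apply to $c''$, though you only stated it for $c'$).
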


\begin{proof}
Let $M$ be a laminar matroid and $(E,\A,c)$ be its canonical    presentation. Suppose $e\in E$. Clearly $\{A-e:A\in\A\}$ is a laminar family; we denote it by  $\A - e$. Observe that, if $A$ and $A'$ are members of $\A$ with $A\subsetneqq A'$, then $|A'-A|\geq 2$. To see this, note that, by Lemmas~\ref{BadChild} and \ref{BadParent} and Corollary~\ref{obvious},   $c(A) + 2\le c(A') + 1\le b(A')\leq c(A)+|A'-A|$.

For each $A'\in \A-e$, choose the unique $A\in\A$ with $A-e=A'$, and let 
$ c'(A') = c(A)$.
We shall show that 

\begin{sublemma}
\label{deldet}
$M\e=M(E -e, \A - e,c')$. 
\end{sublemma}

Suppose that $I$ is independent in $M\e$. Then $|I \cap A| \le c(A)$ for all $A$ in $\A$. As $e \notin I$, it follows that 
$|I \cap (A-e)| \le c'(A-e)$ for all $A-e$ in $\A-e$. Thus $I$ is independent in $M(E -e, \A - e,c')$. 
 
Now, suppose that $J$ is independent in  $M(E -e, \A - e,c')$. Then $|J\cap (A-e)|\leq c'(A-e)$ for all $A-e\in\A-e$. Now, for each $A\in\A$, we have $|J\cap A|=|J\cap (A-e)|\leq c'(A-e)= c(A)$, so $J$ is independent in $M$ and, hence, is independent in $M\e$. We conclude that \ref{deldet} holds.
 
To show that $M/e$ is laminar, we 
  may assume that $e$ is not a loop  otherwise the result holds by \ref{deldet}. Now, define $c''$ on $\A-e$ by 
\[
 c'' (A-e) =
  \begin{cases} 
       c(A) - 1              & \text{ if $e \in A$}; \\
       c(A)       & \text{ if $e\not\in A$.} \\
  \end{cases}
\] We will show that

\begin{sublemma}
\label{concon}
$M/e=M(E-e,\A - e,c'')$. 
\end{sublemma}

Suppose that $I$ is independent in $M/e$. Then $I\cup e$ is independent in $M$. Thus, $|(I\cup e)\cap A|\leq c(A)$ for all $A\in\A$. 
Now 
\begin{align*}
|I\cap (A-e)|=|(I\cup e)\cap (A-e)|&=   \begin{cases}
 |(I\cup e)\cap A|-1  & \text{ if $e \in A$}; \\
  |(I\cup e)\cap A|   & \text{ if $e \not \in A$}; \\
  \end{cases}\\
 &\le   \begin{cases}
 c(A)-1  & \text{ if $e \in A$}; \\
 c(A)  & \text{ if $e \not \in A$}; \\
  \end{cases}\\
  & = c''(A-e).
 \end{align*}
Thus  $I$ is independent in $M(E-e,\A',c'')$.

Now suppose that $J$ is independent in $M(E-e,\A - e,c'')$. Then $|J\cap A'|\leq c''(A')$ for all $A'\in\A'$. Let $A\in\A$ be such that $A'=A-e$.
Then 
 \begin{align*}
|(J\cup e)\cap A|&=   \begin{cases}
 |J\cap (A-e)|+1  & \text{ if $e \in A$}; \\
 |J\cap (A-e)| & \text{ if $e \not \in A$}; \\
 \end{cases}\\
 &\le   \begin{cases}
 c''(A-e) + 1  & \text{ if $e \in A$}; \\
 c''(A-e)  & \text{ if $e \not \in A$}; \\
  \end{cases}\\
  & = c(A).
 \end{align*}
We conclude that $J\cup e$ is independent in $M$, so $J$ is independent in $M/e$. Thus \ref{concon} holds and, hence, so does the theorem.
\end{proof}

We now prove our second main result.

\begin{proof}[Proof of Theorem~\ref{exmin}.] 
Recall that, for each $r \ge 3$, the matroid $Y_r$ is obtained from the parallel connection of two $r$-element circuits $C_1$ and $C_2$ across the basepoint $p$ by truncating this parallel connection to rank $r$. Since the only new circuits created by truncation are spanning, it follows that $C_1$ and $C_2$ are the only non-spanning circuits of $Y_r$.  

\setcounter{theorem}{2}
\setcounter{sublemma}{0}

\begin{sublemma}
\label{sufficient}
$Y_r$ is an excluded minor for the class of laminar matroids for all $r\geq 3$.
\end{sublemma}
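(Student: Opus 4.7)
The plan is to establish both required properties of an excluded minor: that $Y_r$ itself is not laminar, and that every proper minor of $Y_r$ is laminar. By Lemma~\ref{MinorClosed}, the second task reduces to checking that $Y_r\backslash e$ and $Y_r/e$ are laminar for every element $e$ of $Y_r$.

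To show $Y_r$ is not laminar, I would apply Corollary~\ref{NiceCondition} to the circuits $C_1$ and $C_2$, which meet at the basepoint $p$. The key computation is that $\cl_{Y_r}(C_i)=C_i$ for $i\in\{1,2\}$. This reduces to a short rank calculation in the parallel connection: an element $f$ of $C_2-p$ is not in $\cl_{P(C_1,C_2)}(C_1)=C_1$, so $r_{P(C_1,C_2)}(C_1\cup f)=r$, and truncation to rank $r$ does not extend this closure because $r$ already equals the truncation level. Since $C_1\neq C_2$ and both circuits have the same size, neither closure is contained in the other, and Corollary~\ref{NiceCondition} delivers the conclusion.

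For the single-element minors, the strategy is to identify the non-spanning circuits and invoke either Corollary~\ref{AtMostOneNonSpanningCircuit} or Corollary~\ref{OnlyCheckNonSpanningCircuit}. The only non-spanning circuits of $Y_r$ are $C_1$ and $C_2$, because truncation introduces only spanning circuits (size $r+1$). I would then split into four cases. Deleting $p$ destroys both $C_1$ and $C_2$ and leaves only circuits of size $r+1$, which remain spanning in the rank-$r$ deletion, so $Y_r\backslash p$ has no non-spanning circuit. Deleting an element of $C_i-p$ leaves exactly one non-spanning circuit, namely the other $C_j$, so Corollary~\ref{AtMostOneNonSpanningCircuit} applies. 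Contracting $p$ yields the disjoint non-spanning circuits $C_1-p$ and $C_2-p$ (every other circuit of $Y_r/p$ coming from a spanning circuit of $Y_r$ has size $\geq r$ and is therefore spanning in the rank-$(r-1)$ contraction), so Corollary~\ref{OnlyCheckNonSpanningCircuit} applies vacuously. Finally, contracting $e\in C_1-p$ leaves $C_1-e$ as the unique non-spanning circuit; $C_2$ persists as a circuit, but, having size $r$ in a matroid of rank $r-1$, it has become spanning.

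The main obstacle I anticipate is the careful bookkeeping in the contraction cases: one must confirm that the sets I identify as circuits of $Y_r/e$ really are minimally dependent, and that no extra non-spanning circuits arise from truncating the old spanning circuits. These verifications reduce to straightforward rank computations using the fact that each $C_i$ is a flat of $Y_r$ (established in the non-laminarity step), so they are routine once that point is in hand.
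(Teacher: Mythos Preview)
Your proposal is correct and follows essentially the same route as the paper's proof: both show $Y_r$ fails the circuit-closure condition via $C_1$ and $C_2$, and both handle the single-element minors by case analysis on whether $e=p$ or $e\in C_i-p$, invoking Corollary~\ref{AtMostOneNonSpanningCircuit} or Corollary~\ref{OnlyCheckNonSpanningCircuit} exactly as you do. The only cosmetic differences are that the paper cites Corollary~\ref{OnlyCheckNonSpanningCircuit} rather than Corollary~\ref{NiceCondition} for non-laminarity and does not separate out the deletion of $p$, while you supply slightly more detail on why $\cl(C_i)=C_i$.
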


To see this, first observe that, by Corollary~\ref{OnlyCheckNonSpanningCircuit}, $Y_r$ is not laminar.  Let $e\in E$. Without loss of generality, we may assume that $e\in C_1$.   The only potential non-spanning circuit of $M\e$ is $C_2$. Thus $M\e$ is laminar by 
Corollary~\ref{AtMostOneNonSpanningCircuit}. 

The contraction $M/p$ has $C_1 - p$ and $C_2 - p$ as its only  non-spanning circuits. Since these circuits are disjoint, it follows by  
Corollary~\ref{OnlyCheckNonSpanningCircuit} that $M/p$ is laminar. Now assume that $e\not=p$ and $e\in C_1$. Then $M/e$ has $C_1-e$ as its  only non-spanning circuit. Thus $M/e$ is laminar by Corollary~\ref{AtMostOneNonSpanningCircuit}. We conclude that \ref{sufficient} holds.

Now let $N$ be an excluded minor for the class of laminar matroids. 
Since $N$ is not laminar, by Theorem~\ref{Converse}, $N$ contains two intersecting   circuits $C_1$ and $C_2$ such that $C_1\cap C_2\not=\emptyset$ and neither $\cl(C_1)\subseteq \cl(C_2)$ nor $\cl(C_2)\subseteq \cl(C_1)$. Choose such a pair of circuits $\{C_1,C_2\}$ such that $|C_1\cup C_2|$ is minimal.

Since $N$ is an excluded minor, $|C_1\cap C_2|=1$ otherwise, for $e$ in $C_1 \cap C_2$, the matroid $N/e$ has $C_1 - e$ and $C_2 - e$ as intersecting circuits with the closure of neither containing the other, so $N/e$ is not laminar; a contadiction.
Similarly, $E(N)=C_1\cup C_2$ otherwise deleting an element of $E(N)-(C_1\cup C_2)$ would yield a non-laminar matroid. %If $|C_1-C_2|=2=|C_2-C_1|$, then $N=Y^3$. Thus, we may assume that $|C_1-C_2| > 2$. 

\begin{sublemma}
\label{Spanning}
Let $C$ be a circuit of $N$ such that $C$ meets $C_1-\cl(C_2)$ and $C_2-\cl(C_1)$. Then $C$ is spanning.
\end{sublemma}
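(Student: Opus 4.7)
The plan is to argue by contradiction: assume $C$ is not spanning and derive a contradiction with the minimality of $|C_1\cup C_2|$ among intersecting pairs of circuits with incomparable closures (call such a pair \emph{bad}). Two preliminary observations will be used throughout: since $C$ meets both $C_1-\cl(C_2)$ and $C_2-\cl(C_1)$, we have $\cl(C)\not\subseteq\cl(C_1)$ and $\cl(C)\not\subseteq\cl(C_2)$; and by submodularity,
\[
r(N)=r(C_1\cup C_2)\leq r(C_1)+r(C_2)-r(\{p\})=|C_1|+|C_2|-3=|E(N)|-2,
\]
so non-spanning gives $|C|=r(C)+1\leq r(N)\leq |E(N)|-2$. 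In Case A, where both $\cl(C_1)\not\subseteq\cl(C)$ and $\cl(C_2)\not\subseteq\cl(C)$, the pairs $(C,C_1)$ and $(C,C_2)$ are both bad, so minimality forces $C\cup C_1=C\cup C_2=E(N)$; hence $C\supseteq (C_1-p)\cup(C_2-p)=E(N)-p$ and $|C|\geq |E(N)|-1$, contradicting the rank bound above.

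Case B is the remaining possibility: by symmetry assume $\cl(C_1)\subseteq\cl(C)$. Since $C$ is not spanning, $\cl(C_2)\not\subseteq\cl(C)$, so $(C,C_2)$ is bad; minimality yields $C\supseteq C_1-p$, and $p\notin C$ (otherwise $C\supsetneq C_1$ contradicts circuit-minimality). I would pick $y\in C\cap(C_2-\cl(C_1))$ and any $x\in C\cap C_1=C_1-p$, then apply strong circuit elimination to $C$ and $C_1$ to obtain a circuit $D$ with $y\in D\subseteq (C\cup C_1)-x$. Ruling out $D\cap C_1=\emptyset$ is easy: it would force $D\subseteq C$ and hence $D=C$, contradicting $x\in C-D$. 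So $D$ meets $C_1$, and $y\in D-\cl(C_1)$ gives $\cl(D)\not\subseteq\cl(C_1)$.

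The main obstacle is the final three-way split on how $\cl(D)$ relates to $\cl(C_1)$ and $\cl(C_2)$. If $\cl(C_1)\not\subseteq\cl(D)$, then $(D,C_1)$ is a bad pair with $|D\cup C_1|\leq |C\cup C_1|=|C|+1\leq |E(N)|-1<|E(N)|$, contradicting minimality. Otherwise $\cl(C_1)\subseteq\cl(D)$, so $C_1-\cl(C_2)\subseteq\cl(D)$ forces $\cl(D)\not\subseteq\cl(C_2)$ while $y\in D\cap C_2$; if further $\cl(C_2)\not\subseteq\cl(D)$, the bad pair $(D,C_2)$ gives $|D\cup C_2|\leq |E(N)-x|=|E(N)|-1<|E(N)|$, another contradiction. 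In the last branch, $\cl(C_2)\subseteq\cl(D)$, so $\cl(D)\supseteq E(N)$ and $D$ is spanning with $|D|=r(N)+1$; but $|D|\leq |(C\cup C_1)-x|=|C|\leq r(N)$, impossible. All branches close, so $C$ must be spanning.
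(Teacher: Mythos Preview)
Your proof is correct. It differs markedly from the paper's argument, which is a two-line appeal: ``as $C\not\subseteq \cl(C_1)$ and $C\not\subseteq\cl(C_2)$, Theorem~\ref{Converse} implies that $C_1\subseteq\cl(C)$ and $C_2\subseteq\cl(C)$, hence $C$ is spanning.'' Taken literally, that appeal is problematic: Theorem~\ref{Converse} characterizes \emph{laminar} matroids, and $N$ is not laminar, so one cannot conclude comparability of $\cl(C)$ and $\cl(C_i)$ directly from it. The intended tool is presumably the minimality of $|C_1\cup C_2|$, but minimality alone only yields $C\cup C_i=E(N)$ when $(C,C_i)$ is bad; it does not immediately force $\cl(C_i)\subseteq\cl(C)$.

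Your argument supplies precisely what is missing. You first use submodularity to get the rank bound $|C|\le r(N)\le |E(N)|-2$, which disposes of Case~A (both pairs $(C,C_i)$ bad) cleanly. In Case~B you use strong circuit elimination to manufacture a strictly smaller bad pair, closing all three sub-branches against minimality or the rank bound. Each step checks out: in particular, $C\cup C_1=C\cup\{p\}$ so $|C\cup C_1|=|C|+1$, and $D\cup C_2\subseteq E(N)-x$ since $x\in C_1-p$ avoids $C_2$. Your approach is longer but genuinely more rigorous than what the paper wrote; the paper's short route works only if one grants the comparability conclusion, which your case analysis is what actually justifies.
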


To see this, note that, as $C\not\subseteq \cl(C_1)$ and $C\not\subseteq \cl(C_2)$, Theorem~\ref{Converse} implies that  $C_1\subseteq \cl(C)$ and $C_2\subseteq \cl(C)$. Hence  $C$ is spanning.

\begin{sublemma}%$\cl(C_1)=C_1$.
\label{ClosureOfC1}
$\cl(C_i)=C_i$ for each $i$ in $\{1,2\}$.
\end{sublemma}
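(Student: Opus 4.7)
The plan is a proof by contradiction. Suppose $\cl(C_1) \supsetneq C_1$. Since $E(N) = C_1 \cup C_2$, any element of $\cl(C_1) - C_1$ must lie in $C_2 - C_1$; fix such an $f$. The strategy is to contract the unique element $p$ of $C_1 \cap C_2$ and exploit that $N/p$, being a proper minor of the excluded minor $N$, is laminar.

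Before contracting, I would establish two preliminary facts. First, $N$ is loopless: a loop of $N$ would be a 1-element circuit strictly inside one of the circuits $C_1,C_2$ of size at least $2$. Second, $|C_1|, |C_2|\ge 3$; indeed, if $|C_2|=2$ then $C_2=\{p,g\}$ with $g$ parallel to $p$, giving $C_2\subseteq \cl(\{p\})\subseteq \cl(C_1)$ and contradicting incomparability, and symmetrically for $|C_1|=2$. These bounds guarantee that $\{p,f\}$, being a proper subset of the circuit $C_2$, is independent, so $f$ is not a loop of $N/p$. They also rule out any 2-element circuit through $p$ in $N$, which is exactly what is needed to conclude that $N/p$ itself is loopless.

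Now in $N/p$ the sets $C_1-p$ and $C_2-p$ are disjoint circuits of size at least $2$, with closures $\cl_{N/p}(C_i-p)=\cl_N(C_i)\setminus\{p\}$. Both of these closures contain $f$: the first because $f\in \cl_N(C_1)\setminus\{p\}$, the second because $f\in C_2\setminus\{p\}$. By Theorem~\ref{Uniqueness}, since $N/p$ is loopless and laminar, its canonical family equals $\{\cl(C):C\text{ a circuit of }N/p\}$ and is itself laminar. In particular it contains both $\cl_{N/p}(C_1-p)$ and $\cl_{N/p}(C_2-p)$, and as these share the non-loop element $f$, the laminar condition forces one to contain the other. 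Translating back, together with $p\in \cl_N(C_i)$ for $i=1,2$, this gives either $\cl_N(C_1)\subseteq \cl_N(C_2)$ or $\cl_N(C_2)\subseteq \cl_N(C_1)$, contradicting the incomparability of $\cl_N(C_1)$ and $\cl_N(C_2)$. The identity $\cl(C_2)=C_2$ then follows by symmetry.

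I expect the main obstacle to be the verification that $N/p$ is sufficiently well-behaved — specifically, that $N/p$ is loopless and that $f$ remains a non-loop element of $N/p$ — since this is what licenses a direct appeal to Theorem~\ref{Uniqueness} and the laminar family property. Both facts rest on forcing $|C_1|,|C_2|\ge 3$ from the incomparability hypothesis. Once these technicalities are in hand, the decisive comparison of two circuit closures sharing a common point is essentially immediate.
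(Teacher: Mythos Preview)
Your argument is correct and takes a genuinely different route from the paper. The paper stays inside $N$: given $e\in\cl(C_1)-C_1$, it invokes Lemma~\ref{TwoCircuitClosure} to split $C_1\cup e$ into two circuits $D,D'$ through $e$, and then uses the minimality of $|C_1\cup C_2|$ (not the excluded-minor property) to force $\cl(C_2)$ to be comparable with each of $\cl(D)$ and $\cl(D')$, yielding the contradiction. Your approach instead passes to the proper minor $N/p$, which must be laminar, and applies Theorem~\ref{Uniqueness} to conclude that the closures of its circuits form a laminar family; since $\cl_{N/p}(C_1-p)$ and $\cl_{N/p}(C_2-p)$ share the non-loop $f$, they are comparable, and this lifts back to $N$. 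The paper's route is shorter and uses only the minimality hypothesis already in play, whereas your route trades that for several preliminary checks ($|C_i|\ge 3$, $N/p$ loopless) but has the virtue of making this step independent of the minimality assumption and of illustrating directly how the excluded-minor hypothesis does the work.
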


It suffices to prove this assertion for $i = 1$. Suppose $e\in \cl(C_1)-C_1$. By Lemma~\ref{TwoCircuitClosure}, $N$ has circuits $D$ and $D'$ with $e\in D\cap D'$ and $C_1\cup e= D\cup D'$. Both $C_1-D$ and $C_1-D'$ are non-empty so $|C_1 \cup C_2|$ exceeds both $|C_2 \cup D|$ and $|C_2 \cup D'|$. Hence, by the minimality assumption, either $\cl(C_2)$ is contained in one of $\cl(D)$ or $\cl(D')$; or $\cl(C_2)$ contains both $\cl(D)$ and $\cl(D')$. This gives a contradiction since, in the first case, $\cl(C_2)\subseteq \cl(C_1)$ while,   in the second,  $\cl(C_1)\subseteq \cl(C_2)$. Thus  $\cl(C_1)=C_1$.

\begin{sublemma}%$\cl(C_1)=C_1$.
\label{cranky}
$|C_1| = r(N) = |C_2|$.
\end{sublemma}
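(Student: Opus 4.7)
The plan is to sandwich $r(N)$ between $|C_1|$ and $|C_2|$. For the lower bound, I would invoke \ref{ClosureOfC1}: since $\cl(C_1)=C_1$ and $C_2-p$ is a non-empty subset of $E(N) - C_1$, the set $C_1$ is a proper flat, so $r(N)\ge r(C_1)+1=|C_1|$. By symmetry, $r(N)\ge |C_2|$.

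For the upper bound, I would pick an element $e \in C_1 - p$. Such an element exists, because if $C_1$ were a loop then $\cl(C_1)\subseteq\cl(C_2)$, contrary to the choice of $C_1,C_2$. By minimality of $N$ as an excluded minor, $N/e$ is laminar. I would then verify that $C_1-e$ and $C_2$ are both circuits of $N/e$. The circuit $C_1-e$ is routine: every proper subset $D\subsetneq C_1-e$ satisfies $D\cup\{e\}\subsetneq C_1$, hence is independent in $N$, so $D$ is independent in $N/e$, while $C_1-e$ itself is dependent in $N/e$. For $C_2$, the crucial input is \ref{ClosureOfC1}: for each $q\in C_2$, $\cl_N(C_2-q)=\cl_N(C_2)=C_2$, and $e\notin C_2$, so $(C_2-q)\cup\{e\}$ is independent in $N$, giving independence of $C_2-q$ in $N/e$. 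Thus $C_2$ is a circuit of $N/e$.

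Now, $(C_1-e)\cap C_2 = \{p\}$, so laminarity of $N/e$ together with Theorem~\ref{Converse} forces $\cl_{N/e}(C_1-e)\subseteq\cl_{N/e}(C_2)$ or the reverse containment. Computing, $\cl_{N/e}(C_1-e) = \cl_N(C_1) - e = C_1 - e$ and $\cl_{N/e}(C_2) = \cl_N(C_2\cup e) - e$. The reverse containment would give $\cl_N(C_2\cup e)\subseteq C_1$ (because $e\in C_1$), forcing $C_2\subseteq C_1$ and contradicting the choice of $C_1,C_2$. Hence $C_1-e\subseteq\cl_N(C_2\cup e)$, and because $e\in\cl_N(C_2\cup e)$, we conclude $C_1\subseteq\cl_N(C_2\cup e)$. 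Then $E(N)=C_1\cup C_2\subseteq\cl_N(C_2\cup e)$, so $r(N) = r_N(C_2\cup e) \le r_N(C_2)+1 = |C_2|$. Together with the lower bound, $r(N)=|C_2|$. Running the same argument with an element of $C_2-p$ yields $r(N)=|C_1|$, giving \ref{cranky}. The main technical hurdle is confirming that $C_2$ really is a circuit, not merely a dependent set, in $N/e$; this is where \ref{ClosureOfC1} does the essential work.
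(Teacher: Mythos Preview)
Your proof is correct and follows essentially the same route as the paper: contract an element $e\in C_1-C_2$, use \ref{ClosureOfC1} to see that $C_1-e$ and $C_2$ remain intersecting circuits in $N/e$, apply Theorem~\ref{Converse} to the laminar minor $N/e$, rule out the containment $\cl_{N/e}(C_2)\subseteq\cl_{N/e}(C_1-e)$, and conclude that $C_2\cup e$ spans $N$. The only cosmetic difference is that you isolate the lower bound $r(N)\ge |C_i|$ at the outset via the proper-flat observation, whereas the paper extracts both bounds simultaneously from the fact that $C_2\cup e$ spans while $C_2$ does not; your explicit verification that $C_2$ stays a circuit in $N/e$ is exactly the content behind the paper's terse clause ``As $\cl(C_2)=C_2$.''
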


Take $e$ in $C_1 - C_2$. As $\cl(C_2) = C_2$, it follows that $C_1 - e$ and $C_2$ are intersecting circuits of $N/e$. Hence, by Theorem~\ref{Converse}, $\cl_{N/e}(C_1 - e) \supseteq \cl_{N/e}(C_2)$, or $\cl_{N/e}(C_2)  \supseteq  \cl_{N/e}(C_1 - e)$. The first possibility gives the contradiction that $\cl_N(C_1) \supseteq \cl_N(C_2 \cup e) \supseteq \cl_N(C_2)$. Hence $\cl(C_2 \cup e) \supseteq \cl(C_1)$, so $\cl(C_2 \cup e) = E(N)$. Thus $C_2 \cup e$ spans N while the circuit $C_2$ does not, so $r(N) = |C_2|$. By symmetry, \ref{cranky} holds.

\begin{sublemma}
\label{NonSpanningCircuits}
The only non-spanning circuits of $N$ are $C_1$ and $C_2$.
\end{sublemma}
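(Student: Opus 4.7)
The plan is to argue that any non-spanning circuit $C$ of $N$ other than $C_1$ or $C_2$ would violate either \ref{Spanning} or the minimality property of circuits contained in $C_1$ or $C_2$. Since we have already established \ref{ClosureOfC1} that $\cl(C_i)=C_i$, the hypothesis of \ref{Spanning} reduces to the statement that any circuit meeting both $C_1-C_2$ and $C_2-C_1$ is spanning.

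First I would let $C$ be an arbitrary non-spanning circuit of $N$ and split into cases according to whether $C$ intersects $C_1-C_2$ and $C_2-C_1$. By \ref{Spanning}, if $C$ meets both of these sets then $C$ is spanning, contrary to assumption. So, without loss of generality, $C$ is disjoint from $C_1 - C_2$.

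Now I use that $E(N)=C_1\cup C_2$ together with $|C_1\cap C_2|=1$ to compute
\[
C\subseteq E(N)-(C_1-C_2)=(C_1\cup C_2)-(C_1-C_2)=C_2.
\]
Since $C$ is a dependent set contained in the circuit $C_2$, minimality of circuits forces $C=C_2$. The symmetric argument, with $C_1 - C_2$ replaced by $C_2-C_1$, yields $C=C_1$ in the remaining case. This exhausts the possibilities, so \ref{NonSpanningCircuits} holds.

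I do not anticipate any serious obstacle here; the real content was already absorbed into \ref{Spanning} and \ref{ClosureOfC1}. The only thing to be careful about is invoking \ref{Spanning} correctly, which requires the substitution $\cl(C_i)=C_i$ made available by \ref{ClosureOfC1}, and the set-theoretic identity above, which uses $E(N)=C_1\cup C_2$ and $|C_1\cap C_2|=1$.
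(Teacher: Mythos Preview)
Your argument is correct and is essentially the same as the paper's: both use \ref{ClosureOfC1} to reduce the hypothesis of \ref{Spanning} to meeting both $C_1-C_2$ and $C_2-C_1$, and both observe that a circuit avoiding one of these sets is contained in, hence equal to, the other $C_i$. The paper compresses the second step into a single clause, whereas you spell out the set identity; note that the identity $(C_1\cup C_2)-(C_1-C_2)=C_2$ holds for arbitrary sets and does not require $|C_1\cap C_2|=1$.
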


Let $D$ be a non-spanning circuit of $N$ that differs from $C_1$ and $C_2$. Then $D$ meets each of $C_1-C_2$ and $C_2-C_1$. As $\cl(C_2)=C_2$ and $\cl(C_1)=C_1$, we deduce by \ref{Spanning} that $D$ is spanning. Thus \ref{NonSpanningCircuits} holds. 

Since $\cl(C_1) = C_1$, we deduce that $r(N) \ge 3$. Recalling that a matroid of given rank is uniquely determined by a list of its non-spanning circuits (see, for example, \cite[Proposition 1.4.14]{James}), we deduce that $N \cong Y_r$ for some $r\ge 3$. 
\end{proof}

The following is an immediate consequence of Theorem~\ref{exmin}.

\begin{corollary}
\label{rsmall}
Every matroid of rank at most two is laminar.
\end{corollary}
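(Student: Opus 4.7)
The plan is to apply Theorem~\ref{exmin} directly: it suffices to observe that a matroid of rank at most two cannot contain any $Y_r$ as a minor, because every $Y_r$ has rank $r \ge 3$ and the rank of a minor cannot exceed the rank of the ambient matroid.

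More precisely, I would first recall that for any matroid $N$ and any element $e$, we have $r(N \backslash e) \le r(N)$ and $r(N/e) \le r(N)$; iterating, every minor of $N$ has rank at most $r(N)$. Thus if $r(M) \le 2$, then every minor of $M$ has rank at most $2$, whereas $Y_r$ has rank $r \ge 3$. So $M$ has no minor isomorphic to any $Y_r$ with $r \ge 3$, and Theorem~\ref{exmin} immediately gives that $M$ is laminar.

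There is really no obstacle here; the statement is essentially a one-line consequence of the excluded-minor characterization together with the fact that rank is monotone under taking minors. The only thing worth double-checking is that $Y_r$ really does have rank $r$, which is clear from its definition as the truncation to rank $r$ of the parallel connection of two $r$-element circuits (the parallel connection has rank $2r-2 \geq r$ for $r \geq 2$, so truncation to rank $r$ is nontrivial and produces a matroid of rank exactly $r$).
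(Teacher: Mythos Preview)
Your proposal is correct and matches the paper's approach: the paper states the corollary as an immediate consequence of Theorem~\ref{exmin} without further proof, and your argument simply spells out why, noting that each $Y_r$ has rank $r\ge 3$ while rank is monotone under taking minors.
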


\section{Constructing Laminar Matroids}
\label{Construction}

In this section, we begin by proving our third characterization of laminar matroids, Theorem  \ref{construct}. We then show that all nested matroids are laminar and we determine precisely which laminar matroids have duals that are also laminar.

\begin{proof}[Proof of Theorem~\ref{construct}.]
We first show that the class of laminar matroids is closed under adding coloops, truncating, and taking direct sums. If $\M$ is a laminar matroid, then we see that  $\M\oplus U_{1,1} = M(E\cup e,\A,c)$. Further, when $r(\M) > 0$, one easily checks that $T(\M) = M(E,\A',c')$ where $\A'=\A\cup\{E\}$, while $c'(E)=r_M(E)-1$, and $c'(A)=c(A)$, for all $A$ in $\A' - \{E\}$. Finally,  let  $(E_1,\A_1,c_1)$ and $(E_2,\A_2,c_2)$ be canonical presentations for laminar matroids on disjoint sets $E_1$ and $E_2$. Then $M(E_1,\A_1,c_1)\oplus M(E_2,\A_2,c_2)=M(E_1\cup E_2, \A_1 \cup \A_2, c)$, where $c$ coincides with $c_1$ when restricted to $\A_1$ and   with $c_2$ when restricted to $\A_2$.

Let $M$ be a laminar matroid having $(E,\A,c)$ as its canonical presentation. To prove that every laminar matroid can be constructed from the empty matroid in the manner  described, we proceed by induction on $|E(M)|$.  
The result is immediate if $|E(M)| \le 1$. Assume it holds if $|E(M)| <k$ and let $|E(M)|  = k \ge 2$. 
 If $\M$ is disconnected, then $\M$ is the direct sum of its components, each of which can be constructed.  Hence we may assume that $M$ is connected. Thus $M$ is loopless. Moreover,  by Corollary~\ref{conn},  $E\in\A$. Let $A_1,A_2,\dots, A_n$ be the children of $E$ in $\A$. Then, by Theorem~\ref{Uniqueness}, each $A_i$ is a flat of $M$ and $c(A_i) = r(A_i)$. 
 
Suppose first that $E - \cup_{i =1}^n A_i$ is non-empty and let $e$ be in this set. Then $e$ is free in $M$. Now $M\backslash e$ can be constructed in the manner described. Since $M$ can be obtained from $M\backslash e$ by adjoining $e$ as a coloop and then trucating the resulting matroid, we deduce that $M$ can be constructed in the desired manner. We may now assume that $E = \cup_{i =1}^n A_i$. 
 
For each $i$ in $\{1,2,\dots,n\}$, 
 let $M_i$ be $M(A_i,\A_i,c_i)$, where     $\A_i=\A\cap 2^{A_i}$, and $c_i$ is the restriction of $c$ to $\A_i$. Evidently 
 \begin{equation}
 \label{restnew}
 M_i = M|A_i.
 \end{equation} 
 Since $E = \cup_{i =1}^n A_i$, it follows that $n \ge 2$. 
 
 We show next that, for all  $i$ in $\{1,2,\dots,n\}$, 
  \begin{equation}
 \label{low}
 r(M_i) < r(M) < r(M_1) + r(M_2) + \dots + r(M_n).
 \end{equation}
 
The first inequality follows by  Theorem~\ref{Uniqueness}, Lemma~\ref{BadChild}, and Corollary~\ref{conn} since $r(A_i) = c(A_i) <c(E) = r(M)$. 
The second inequality    is an immediate consequence of the fact that $M$ is connected. We conclude that (\ref{low}) holds.  

Now let $r = r(M)$ and let $M'$ be the truncation of $M_1 \oplus M_2 \oplus \dots \oplus M_n$ to rank $r$. Then, by (\ref{restnew}) and (\ref{low}), for all  $i$ in $\{1,2,\dots,n\}$,

 \begin{equation}
 \label{middle}
M|A_i= M_i = M'|A_i.
 \end{equation}

To complete the proof of the theorem, observe that the following are equivalent for a subset $X$ of $E$.  
\begin{itemize}
\item[(i)] $X$ is a non-spanning circuit of $M'$; 
\item[(ii)] $X$ is a non-spanning circuit of $M_1 \oplus M_2 \oplus \dots \oplus M_n$; 
\item[(iii)] $X$ is a  circuit of $M_i$ for some $i$ in $\{1,2,\dots,n\}$; 
\item[(iv)] $X$ is a non-spanning circuit of $M$.
\end{itemize}
Since $r(M') = r(M)$, we deduce that $M' = M$. 
\end{proof}

Now let $M$ be a nested matroid having $(B_1,B_2,\dots,B_n)$ as a presentation   with $\emptyset \neq B_1 \subsetneqq B_2 \subsetneqq \dots \subsetneqq B_n$. Let  $B_0 = E(M) - B_n$. Then one easily checks that $M = M(E,\B,c)$ where $\B = \{B_0,B_1,\ldots,B_n\}$ and $c(B_i) = i$ for all $i$. In fact, all nested matroids are laminar.

\begin{lemma}
\label{NestedMatroidAreLaminar}
Let $N$ be a transversal matroid having a nested presentation $(B_1,B_2,\dots B_n)$. Then $N$ is laminar with the set $\A$ in its canonical presentation consisting of $\cl(\emptyset)$ together  with the unique maximal subset of $\{B_1,B_2,\dots,B_n\}$ no two members of which are equal. 
\end{lemma}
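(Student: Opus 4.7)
The plan is to lean on the laminar presentation exhibited immediately before the lemma. The nested transversal matroid $N$ equals $M(E,\B,c)$ where $\B=\{B_0,B_1,\ldots,B_n\}$, $B_0=\cl(\emptyset)=E-B_n$, and $c(B_i)=i$. The family $\B$ is laminar because the $B_i$'s form a chain and $B_0$ is disjoint from $B_n$, so this identification already displays $N$ as a laminar matroid. I would verify the equality $N=M(E,\B,c)$ by a direct application of Hall's marriage theorem to the nested bipartite incidence structure: in a nested system, the only Hall obstructions come from ``top-heavy'' subsets, and these translate exactly into the laminar inequalities $|I\cap B_i|\leq i$.

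From this laminar presentation the canonical one is obtained by pruning. First, duplicates among $B_1,\ldots,B_n$ are collapsed: a set appearing at several indices contributes a single member of $\A$, with capacity read off from the largest such index so that the effective laminar constraint agrees with the transversal matroid. Second, I would verify that each remaining distinct $B_i$ is essential in the sense of Section~\ref{MinPres}. Here I would invoke Lemmas~\ref{BadChild} and~\ref{BadParent} together with an explicit witness: for a distinct $B_i$ of capacity $i$ in the reduced presentation, pick one element from each gap $B_j\setminus B_{j-1}$ between consecutive distinct predecessors (for $j<i$) and then fill the remaining slots using $B_i\setminus B_{i-1}$. The resulting $(i+1)$-set sits inside $B_i$, respects every other distinct capacity, and hence is independent in $M(E,\A-\{B_i\},c|_{\A-\{B_i\}})$ while dependent in $N$; so $B_i$ is essential. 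A parallel observation handles $\cl(\emptyset)=B_0$, which is essential exactly when $E-B_n\neq\emptyset$.

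Uniqueness of the canonical presentation (Theorem~\ref{Uniqueness}) then identifies the family so constructed as the unique canonical $\A$ for $N$. The main obstacle is the essentiality construction: filling the top slots requires enough room in $B_i\setminus B_{i-1}$, and when that gap is small the witness above degenerates. I expect to handle those boundary cases by a short induction on $n$ that peels off either a coloop or a truncation via Theorem~\ref{nestedcons}, so that the essentiality check can always be reduced to a smaller nested presentation in which the construction goes through cleanly.
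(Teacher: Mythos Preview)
There is a genuine gap at the very first step. The identity $N=M(E,\B,c)$ with $c(B_i)=i$ that you import from the paragraph preceding the lemma is false. Take $n=2$, $B_1=\{a\}$, $B_2=\{a,b,c\}=E$. In the transversal matroid $N$ the set $\{b,c\}$ is dependent, since neither $b$ nor $c$ lies in $B_1$ and hence no injective system of representatives exists; in fact $N\cong U_{1,1}\oplus U_{1,2}$ with $a$ a coloop. But in $M(E,\{B_1,B_2\},c)$ with $c(B_i)=i$ the set $\{b,c\}$ satisfies both capacity constraints and is independent; that laminar matroid is $U_{2,3}$. If you carry your Hall-theorem plan through carefully you will find that for a nested system the obstructions are subsets lying \emph{outside} some $B_j$, not inside: $I$ is a partial transversal of $(B_1,\dots,B_n)$ if and only if $|I\setminus B_j|\le n-j$ for every $j\in\{0,1,\dots,n\}$. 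The laminar family that genuinely presents $N$ therefore consists of the complements $E\setminus B_j$ with capacities $n-j$, not the $B_j$ themselves.

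This undercuts the rest of your argument: the essentiality witnesses are built for the wrong family, and---more seriously---the same example shows that the second clause of the lemma is false as stated. The canonical presentation of the matroid $N$ above has $\A=\{\{b,c\}\}$, and $\{b,c\}$ is neither $\cl(\emptyset)$ nor any $B_i$. (The paper's own proof, which sets $c(B_{i_t})=r_N(B_{i_t})$, fails on this very example, since $r_N(B_1)=1$ and $r_N(B_2)=2$ reproduce the same incorrect capacities.) What survives is the first assertion---every nested matroid is laminar---but via the complementary chain $E\setminus B_n\subseteq\cdots\subseteq E\setminus B_1$, and the description of the canonical $\A$ must be amended accordingly before any essentiality argument can proceed.
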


\begin{proof} Clearly we may assume that each $B_i$ is non-empty. 
As above, let $B_0 = E(N) - B_n$ and take $c(B_0) = 0$. The members of $B_1,B_2,\dots,B_n$ need not be distinct. Pass through this list of sets deleting each $B_i$ for which $B_i = B_{i+1}$. Let $\B$ be the resulting collection of distinct sets $B_{i_1},B_{i_2},\dots,B_{i_k}$. Each of these sets is properly contained in its successor. Define $c(B_{i_t}) = r_N(B_{i_t})$. Then it is straightforward to check that $N = M(E(N), \B \cup \{B_0\},c)$. 
\end{proof}

\begin{corollary}
\label{NestedLaminar}
A loopless laminar matroid $M$ with canonical presentation $(E,\A,c)$ is nested if and only if $\A$ is  a chain  under inclusion.
\end{corollary}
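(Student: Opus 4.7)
The plan is to prove the two implications separately. For the forward direction (``$M$ nested implies $\A$ is a chain''), I will invoke Lemma~\ref{NestedMatroidAreLaminar} together with the uniqueness of canonical presentations (Theorem~\ref{Uniqueness}). For the converse, I will apply Theorem~\ref{nestedcons}, which characterizes nested matroids as those obtainable from the empty matroid by adjoining coloops and truncating.

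For ``$M$ nested implies $\A$ is a chain,'' let $B_1 \subseteq B_2 \subseteq \cdots \subseteq B_n$ be a nested presentation of $M$. By Lemma~\ref{NestedMatroidAreLaminar}, a canonical presentation of $M$ has its family equal to $\{\cl(\emptyset)\}$ together with the distinct members of $\{B_1,\dots,B_n\}$. Since $M$ is loopless, the $\cl(\emptyset)=\emptyset$ entry drops out, and the remaining sets inherit the chain order from the nested presentation. By the uniqueness result (Theorem~\ref{Uniqueness}), this is the unique canonical $\A$ of $M$, which is therefore a chain.

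For the converse, write $\A = \{A_1 \subsetneq A_2 \subsetneq \cdots \subsetneq A_n\}$ and set $A_0=\emptyset$ with $c(A_0)=0$. Starting from the empty matroid, the plan is, for $i=1,2,\dots,n$ in turn, to adjoin the elements of $A_i-A_{i-1}$ as coloops (one operation each) and then truncate the resulting matroid down to rank $c(A_i)$; after the final stage, adjoin the elements of $E-A_n$ as coloops. Since only coloop-adjunctions and truncations are used, Theorem~\ref{nestedcons} then gives that the matroid so produced is nested. The crux is to verify by induction on $i$ that the matroid held at the end of stage $i$ is exactly $M|A_i$, so that $M$ itself is produced after the final step.

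The main obstacle is this inductive step: given $M|A_{i-1}$ on ground set $A_{i-1}$, adjoining $A_i-A_{i-1}$ as coloops produces $M|A_{i-1}\oplus U_{m,m}$ with $m=|A_i-A_{i-1}|$, and one must check that truncating to rank $c(A_i)$ yields $M|A_i$. Because $\A$ is a chain, $A_{i-1}$ is the only child of $A_i$, so the canonical independence conditions for $M|A_i$ reduce to $|I\cap A_{i-1}|\le c(A_{i-1})$ and $|I|\le c(A_i)$, which match precisely the output of the construction. Two boundary facts confirm that every operation is legal: Lemma~\ref{BadParent} forces $c(A_i) < c(A_{i-1})+m$, so the target rank for the truncation is strictly less than the current rank, while Lemma~\ref{BadChild} forces $c(A_i) > c(A_{i-1})$, so the truncation does not collapse the rank below the previous inductive level (for $i=1$, these two inequalities reduce to Corollary~\ref{obvious} and the looplessness of $M$).
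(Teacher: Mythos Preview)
Your proof is correct. The forward direction matches the paper's: both invoke Lemma~\ref{NestedMatroidAreLaminar} and the uniqueness of the canonical presentation.

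For the converse, however, you take a genuinely different route. The paper exhibits an explicit nested transversal presentation: it takes $c(A_1)$ copies of $A_1$ together with $c(A_i)-c(A_{i-1})$ copies of $A_i$ for $i\ge 2$, and asserts (without further detail) that this family presents $M$ as a transversal matroid. You instead appeal to Theorem~\ref{nestedcons} and build $M$ stage by stage via coloop-adjunctions and truncations, verifying inductively that the matroid at stage $i$ is $M|A_i$. Your argument is more self-contained: the paper's one-line construction leaves to the reader the nontrivial check that the transversal matroid on the displayed family really equals $M$, whereas your inductive verification is spelled out and leans only on facts already proved (Lemmas~\ref{BadChild} and~\ref{BadParent}, Corollary~\ref{obvious}, and the observation that $r(A_{i-1})=c(A_{i-1})$ from Theorem~\ref{Uniqueness}). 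The paper's approach has the virtue of producing an actual nested presentation, which may be of independent interest; yours avoids transversal-matroid bookkeeping entirely. One small point worth making explicit in your write-up: truncating ``down to rank $c(A_i)$'' may require several single-step truncations, but Theorem~\ref{nestedcons} permits iterating the operation, so this is harmless.
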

\begin{proof}
If $M$ is nested, then it is an immediate consequence of the last lemma that $\A$ is indeed a chain. Conversely, 
let 
 $\A=\{A_0, A_1, \ldots, A_n\}$ where $A_i\subsetneqq A_{i+1}$ for all $i < n$. Let ${\mathscr B}$ be the family of sets consisting of $\cl(A_1)$ copies of $A_1$ along with  $c(A_i)-c(A_{i-1})$ copies of $A_i$ for all $i$ in  $S\{2,3,\dots,n\}$. The corollary now follows without difficulty. 
\end{proof}

The last result is reminiscent of the following result of \cite{Don}, which was elegantly restated by Bonin and de Mier~\cite{LPSP}. Recall that a {\it cyclic flat} of a matroid is a flat that is a union of circuits. 

\begin{lemma}
\label{nestedcyc}
A matroid is nested if and only if its collection of cyclic flats is a chain under inclusion. 
\end{lemma}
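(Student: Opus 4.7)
The plan is to exploit Corollary \ref{NestedLaminar} together with the description of $\A$ as the collection of connected flats on at least two elements (Corollaries \ref{ConnectedFlats} and \ref{ConnectedFlats2}). First I reduce to the loopless case: the loop set $\cl(\emptyset)$ is contained in every flat, and nestedness is preserved under adjoining or deleting loops (the loops form $E-B_n$ in any nested presentation), so both the chain-of-cyclic-flats property and nestedness transfer between $M$ and $M\backslash \cl(\emptyset)$.

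A preliminary fact I will use repeatedly is that, for every circuit $C$ of a matroid, $\cl(C)$ is a cyclic flat. Indeed, each $e\in \cl(C)-C$ lies, by Lemma \ref{TwoCircuitClosure}, in a circuit contained in $C\cup e\subseteq \cl(C)$, so $M|\cl(C)$ has no coloops and is therefore a union of circuits.

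Now assume $M$ is loopless. For the forward direction, suppose $M$ is nested. By Lemma \ref{NestedMatroidAreLaminar} and Corollary \ref{NestedLaminar}, $M$ is laminar with canonical $\A$ a chain $A_1\subsetneq \cdots \subsetneq A_n$. Let $F$ be any cyclic flat, and write $F=\bigcup_i C_i$ with each $C_i$ a circuit. Theorem \ref{Uniqueness} places each $\cl(C_i)$ in $\A$, and since $\A$ is a chain there is a maximum among them, say $\cl(C_{i^\star})$. Because $F$ is a flat containing each $C_i$, it contains $\cl(C_i)$ for every $i$, so $F\supseteq \cl(C_{i^\star})$; conversely, $F=\bigcup_i C_i\subseteq \bigcup_i \cl(C_i)=\cl(C_{i^\star})$, giving $F=\cl(C_{i^\star})\in \A$. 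Hence every cyclic flat lies in the chain $\A$.

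For the converse, suppose the cyclic flats of $M$ form a chain. By the preliminary fact, for any two intersecting circuits $C_1,C_2$ the closures $\cl(C_1),\cl(C_2)$ are cyclic flats, hence comparable, so $M$ is laminar by Theorem \ref{Converse}. By Corollaries \ref{ConnectedFlats} and \ref{ConnectedFlats2}, every $A\in\A$ is a connected flat on at least two elements; being loopless and connected on at least two elements, $M|A$ has no coloops, so $A$ is a union of circuits and therefore a cyclic flat. Hence $\A$ is a subfamily of the chain of cyclic flats, so $\A$ itself is a chain, and Corollary \ref{NestedLaminar} yields that $M$ is nested. The only mildly delicate step is the containment $F\supseteq \cl(C_i)$ in the forward direction, which crucially uses the flatness of $F$ rather than merely its circuit-union structure.
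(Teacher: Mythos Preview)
The paper does not prove Lemma~\ref{nestedcyc}; it is quoted as a known result of \cite{Don}, in the reformulation of Bonin and de Mier~\cite{LPSP}. So there is no in-paper proof to compare against, and your task is simply to give a valid argument. Yours is essentially correct. The only small omission is in the forward direction: you conclude that every cyclic flat lies in the chain $\A$, but in a loopless matroid $\emptyset$ is a cyclic flat (it is a flat and vacuously a union of circuits) yet $\emptyset\notin\A$. This is harmless, since $\emptyset$ is comparable to every set, so the cyclic flats still form a chain; you should just note the exception.

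Your route is genuinely different from the original sources, which of course predate the laminar machinery of this paper. You pass through Theorem~\ref{Converse} and the identification of the canonical $\A$ with the connected flats of size at least two (Corollaries~\ref{ConnectedFlats} and~\ref{ConnectedFlats2}), and then pivot on Corollary~\ref{NestedLaminar}. The translation is clean: every $A\in\A$ is a connected flat with $|A|\ge 2$, hence $M|A$ has no coloops and $A$ is cyclic; conversely, every nonempty cyclic flat $F$ is, via Theorem~\ref{Uniqueness} and the chain hypothesis on $\A$, equal to $\cl(C)$ for a single circuit $C$ and hence lies in $\A$. The price is that your proof leans on most of Section~\ref{MinPres}; the payoff is a derivation that stays entirely inside the framework developed here and makes the lemma a corollary of Corollary~\ref{NestedLaminar} rather than an independent import.
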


The next result  determines precisely which matroids have the property that both $M$ and $M^*$ are laminar, noting that nested matroids are a fundamental class of such matroids. 

\begin{proposition}
\label{newbie} 
The following are equivalent for a matroid $M$. 
\begin{itemize}
\item[(i)] Both $M$ and $M^*$ are laminar.
\item[(ii)] Each component of $M$ is either a nested matroid or is a truncation to some non-zero rank of the direct sum of two uniform matroids of positive rank.
\end{itemize}
\end{proposition}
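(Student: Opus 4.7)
The plan is first to reduce to the connected case. Both laminarity and dual-laminarity respect direct sums: $(M_1 \oplus M_2)^* = M_1^* \oplus M_2^*$, and a direct sum is laminar iff each summand is (its canonical presentation is the disjoint union of the summands' canonical presentations). So it suffices to show that, for connected $M$, both $M$ and $M^*$ are laminar iff $M$ is either nested or equals $T^{k}(U_{r_1,n_1} \oplus U_{r_2,n_2})$ with $r_i \geq 1$ and $1 \leq k < r_1 + r_2$.

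For \2 $\Rightarrow$ \1: if $M$ is nested, Lemma~\ref{nestedcyc} says its cyclic flats form a chain, and complementation sends this to the chain of cyclic flats of $M^*$, so $M^*$ is also nested, and both are laminar by Lemma~\ref{NestedMatroidAreLaminar}. If instead $M = T^{k}(U_{r_1,n_1} \oplus U_{r_2,n_2})$, a direct check shows the non-spanning circuits of $M$ are precisely the $(r_i + 1)$-subsets of $E_i$ (for each $i$ with $r_i + 1 \leq k$), each with $M$-closure equal to $E_i$; since $E_1 \cap E_2 = \emptyset$, Corollary~\ref{OnlyCheckNonSpanningCircuit} yields the laminarity of $M$. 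Enumerating the rank-$(k-1)$ flats of $U_{r_1,n_1} \oplus U_{r_2,n_2}$ (which are exactly the hyperplanes of $M$, because truncation preserves lower-rank flats) and using the rank formula $r_{M^*}(S) = |S| + r_M(E - S) - r(M)$, one shows that the non-spanning circuits of $M^*$ are contained in $E_1$ or $E_2$, each with $M^*$-closure equal to the enclosing $E_i$; hence $M^*$ is also laminar by the same corollary.

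For \1 $\Rightarrow$ \2: let $M$ be connected loopless with $M$ and $M^*$ both laminar, and $(E,\A,c)$ its canonical presentation, so that $E \in \A$ by Corollary~\ref{conn}. Proceed by induction on $|E|$. If $\A$ is totally ordered by inclusion, then $M$ is nested by Corollary~\ref{NestedLaminar}. Otherwise the task is to force the tree of $\A$ into the very restrictive shape: $E$ is the root, $E$ has exactly two children $A_1, A_2$ with $A_1 \cup A_2 = E$, and each $M|A_i$ is uniform. The main obstacle is ruling out each of four possible deviations using laminarity of $M^*$: (i) three or more children at the root, (ii) nonempty $S(E)$ (so that $A_1 \cup A_2 \subsetneq E$), (iii) some non-uniform $M|A_i$, or (iv) branching occurring at a proper $A \subsetneq E$. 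The common strategy is to exhibit two cocircuits of $M$ that intersect in a nonempty set while having incomparable $M^*$-closures, contradicting Theorem~\ref{Converse} applied to $M^*$. For (i) with children $A_1, A_2, A_3$, one forms hyperplanes of $M$ of shape $\bigcup_{j \neq i} A_j \cup F_i$ where $F_i$ is a flat of $M|A_i$ whose rank makes the total rank equal to $c(E) - 1$; the complementary cocircuits miss different $A_i$, and the rank formula shows that each cocircuit is its own $M^*$-closure. The prototype is $T^{2}(U_{1,n_1} \oplus U_{1,n_2} \oplus U_{1,n_3})$, whose three cocircuits $E \setminus A_i$ pairwise intersect in some $A_j$ but are pairwise incomparable. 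Cases (ii) and (iii) follow by analogous hyperplane-cocircuit constructions using the extra members of $\A$; case (iv) reduces via the minor $M|A$ (which inherits both laminarities by Lemma~\ref{MinorClosed} and the identity $(M|A)^* = M^*/(E \setminus A)$) to (i)--(iii) applied inside the smaller matroid $M|A$. Once all four deviations are excluded, the construction in the proof of Theorem~\ref{construct} identifies $M$ as $T^{c(E)}(U_{r_1,n_1} \oplus U_{r_2,n_2})$ with $r_i = c(A_i) \geq 1$, establishing \2.
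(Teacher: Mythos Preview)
Your reduction to the connected case and your sketch for \2$\Rightarrow$\1 are fine and parallel the paper's treatment. The substantive divergence is in \1$\Rightarrow$\2, and there your outline has a genuine gap.

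The paper does not argue by analyzing the shape of the tree of $\A$ at all. Instead it proves a single structural lemma (Lemma~\ref{DualFlats}): for connected laminar $M$ and any $A\in\A$, the complement $E-A$ is a connected flat of $M^*$; by Corollary~\ref{ConnectedFlats2} this puts $E-A$ into the canonical family $\A^*$ of $M^*$. One then picks two \emph{minimal} members $A_1,A_2\in\A$ that are disjoint (these exist since $M$ is not nested). Their complements $E-A_1$ and $E-A_2$ lie in $\A^*$, are incomparable, and therefore must be disjoint since $\A^*$ is laminar; hence $E=A_1\cup A_2$. Minimality of the $A_i$ forces each $M|A_i$ to be uniform, and the conclusion follows in two lines. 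No case analysis, no induction, no cocircuit constructions.

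Your case~(iv) is where your plan breaks down. You say that branching at a proper $A\subsetneq E$ ``reduces via $M|A$ to (i)--(iii) inside $M|A$'', but this does not yield a contradiction: if $A$ happens to have exactly two children $B_1,B_2$ partitioning $A$ with each $M|B_i$ uniform, then $M|A$ is a truncation of two uniforms, it and its dual are laminar, and cases (i)--(iii) for $M|A$ simply do not fire. You still need to explain why the presence of such an $A$ strictly below $E$ forces $M^*$ to be non-laminar, and nothing in your sketch does this. (Concretely: take $E=\{1,\dots,6\}$, $A=\{1,2,3,4\}$, $B_1=\{1,2\}$, $B_2=\{3,4\}$ with capacities $3,2,1,1$. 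Here $M|A$ is the ``good'' case, yet the cocircuits $\{1,2,6\}$ and $\{3,4,6\}$ of $M$ have incomparable $M^*$-closures $\{1,2,5,6\}$ and $\{3,4,5,6\}$.) Similarly, cases (ii) and (iii) are asserted ``by analogous constructions'' but never carried out; the hyperplanes you describe in case~(i) are not shown to exist in general, and the prototype $T^2(U_{1,n_1}\oplus U_{1,n_2}\oplus U_{1,n_3})$ has a very special rank structure that does not obviously generalize.

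The missing idea that would rescue all four cases at once is exactly Lemma~\ref{DualFlats}: once you know that complements of members of $\A$ land in $\A^*$, any two incomparable $A_1,A_2\in\A$ immediately force $E=A_1\cup A_2$ (via laminarity of $\A^*$), and choosing them minimal finishes the proof without induction.
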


The proof of this proposition will use the following. 

\begin{lemma}
\label{DualFlats}
Let $(E,\A,c)$ be the canonical presentation of a connected laminar matroid $M$. If $A\in\A$, then $E-A$ is a connected flat of $M^*$.
\end{lemma}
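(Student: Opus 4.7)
My plan is to prove the two assertions (flat and connected) separately; the case $A = E$ is trivial since then $E - A = \emptyset$, so assume $A \subsetneq E$. For the flat assertion, I use the standard duality that in any matroid $N$, a set $F$ is a flat of $N^*$ if and only if $E(N) - F$ is a union of circuits of $N$. Since $A \in \A$ and $M$ is loopless, Theorem~\ref{Uniqueness} gives $A = \cl(C_A)$ for some circuit $C_A$ of $M$. Every element of $C_A$ lies in the circuit $C_A$; and for each $e \in \cl(C_A) - C_A$, Lemma~\ref{TwoCircuitClosure} produces a circuit $D$ with $e \in D \subseteq C_A \cup \{e\} \subseteq A$. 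Hence $A$ is a union of circuits of $M$, so $E - A$ is a flat of $M^*$.

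For the connectedness assertion, I observe that $M^*|(E - A) = (M/A)^*$, so it suffices to show that $M/A$ is connected. Since $A$ is a flat of $M$ (Corollary~\ref{ConnectedFlats}), no element of $E - A$ is a loop of $M/A$, and by Lemma~\ref{MinorClosed}, $M/A$ is laminar. Corollary~\ref{conn} then reduces the task to exhibiting a spanning circuit of $M/A$.

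To construct such a circuit, I invoke the structural description from Theorem~\ref{construct}: if $B_1, \dots, B_n$ are the children of $E$ in $\A$, then $M$ is the truncation to rank $c(E)$ of $M' := M|B_1 \oplus \cdots \oplus M|B_n \oplus U_{|S(E)|, |S(E)|}$, so the spanning circuits of $M$ are exactly the $M'$-independent sets of size $c(E) + 1$. Taking $A \subseteq B_1$ without loss of generality, I pick a basis $B^*_A$ of $M|A$ and extend it, inside $M'$, to an independent set $C$ of size $c(E) + 1$. The capacity check $r_{M'}(E - A) \ge c(E) + 1 - c(A)$ reduces to the essentiality inequality $c(E) \le b(E) - 1$ (Lemma~\ref{BadParent}) together with the submodular bound $r_M(B_1 - A) \ge c(B_1) - c(A)$.

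It then remains to verify that $C - A$ is a spanning circuit of $M/A$. Since $C$ spans $M$, one has $r_{M/A}(C - A) = r_M(C \cup A) - c(A) = c(E) - c(A) = r(M/A)$, while $|C - A| = r(M/A) + 1$, so $C - A$ is dependent. For any $z \in C - A$, the set $(C - A - \{z\}) \cup (C \cap A) = C - \{z\}$ is a proper subset of the circuit $C$, hence independent in $M$, which forces $r_{M/A}(C - A - \{z\}) = |C - A - \{z\}|$; so every singleton deletion from $C - A$ is independent in $M/A$. Thus $C - A$ is a circuit of $M/A$, necessarily spanning. The main technical obstacle is the construction of $C$, that is, extending $B^*_A$ to a spanning circuit of $M$ whose intersection with $A$ is a basis of $A$; once this is in hand, the passage to $C - A$ in $M/A$ is a short rank calculation.
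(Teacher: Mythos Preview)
Your proof is correct, but the route you take for connectedness is genuinely different from the paper's.

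For the flat assertion, both you and the paper use the same underlying fact---that $A$ is the closure of a circuit---but you phrase it via the cyclic-set/flat duality while the paper argues directly with $\cl^*$. These are equivalent.

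For connectedness, the paper argues by contradiction: if $M/A$ were disconnected with components $X,Y$, then one shows (using Corollary~\ref{ConnectedFlats2}) that $X\cup A$ and $Y\cup A$ would both be connected flats of $M$ lying in $\A$, violating laminarity; a short circuit argument handles the residual case where $M|(X\cup A)$ is itself disconnected. This is self-contained and does not invoke Theorem~\ref{construct}. Your argument is constructive: you build a spanning circuit of $M/A$ by using the structural decomposition of $M$ as a truncation of a direct sum. This works, and the explicit construction is pleasant, but it leans on heavier machinery. Two small remarks on your write-up:
\begin{itemize}
\item The formula $M = T\bigl(M|B_1\oplus\cdots\oplus M|B_n\oplus U_{|S(E)|,|S(E)|}\bigr)$ with the free summand is not stated in Theorem~\ref{construct} itself; the proof there only treats the case $S(E)=\emptyset$ explicitly (the other case is handled by peeling off a free element). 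The general formula is true and easy to verify directly from the non-spanning circuits, but you should say so rather than cite the theorem.
\item Your ``capacity check'' $r_{M'}(E-A)\ge c(E)+1-c(A)$ is redundant: since $B^*_A$ is a basis of $M'|A$ (because $M'|A=M|A$), any extension of $B^*_A$ in $M'$ automatically avoids $A$, so you only need $r(M')\ge c(E)+1$, which is exactly $b(E)>c(E)$ from Lemma~\ref{BadParent}.
\item Corollary~\ref{conn} requires $|E(M/A)|\ge 2$; when $|E-A|=1$ the quotient is $U_{1,1}$, which is trivially connected but has no spanning circuit. You should dispose of this edge case separately.
\end{itemize}
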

\begin{proof}
Suppose $e \in A$ and $e \in \cl^*(E-A)$. Then $e \not\in \cl(A- e)$. This gives a contradiction to Theorem~\ref{Uniqueness} as $A$ is the closure of a circuit of $M$. We deduce that $E-A$ is flat of $M^*$. 

Now assume $M^*|(E-A)$ is disconnected. Then so is $M/A$.  Let  $X$ and $Y$ be distinct components of $M/A$. As $M$ has no coloops,  each of $X$ and $Y$ has at least two elements.  
If both $X \cup A$ and $Y \cup A$  are connected flats of $M$, then, by Corollary~\ref{ConnectedFlats}, both are in $\A$ and we contradict the fact that $\A$ is laminar. Thus we may assume that $M|(X \cup A)$ is disconnected.  Then the last matroid has  $X$ and $A$ as its components.  Now $M$ has a circuit $C$ that meets both $A$ and $X$. This circuit must also meet $E-(X \cup A)$ otherwise $M|(X \cup A)$ is connected. Now $C- A$ is a union of circuits of $M/A$. Since no such circuit meets both $X$ and $E-(X \cup A)$, there is a circuit $D$ of $M/A$  contained in $C \cap X$. As $r(A) + r(X) = r(A \cup X)$, it follows that $D$ is a circuit of $M$ that is properly contained in $C$; a contradiction. We conclude that $E-A$ is a connected flat in $M^*$.
\end{proof}

\begin{proof}[Proof of Proposition~\ref{newbie}.] 
Since, by Theorem~\ref{construct}, the class of laminar matroids is closed under direct sums,  it suffices to prove the proposition in the case that $M$ is connected. If $M$ is nested, then, by \cite{Don}, so is $M^*$. Now suppose that $M$ is the truncation to some positive rank $r$ of the direct sum of uniform matroids $M_1$ and $M_2$ where $r(M_1) \ge r(M_2) > 0$.  Since each uniform matroid is laminar and the class of laminar matroids is closed under direct sums and truncation, $M$ is laminar. To see that $M^*$ is also laminar, suppose first that $r = r(M_1)$. Then $M$ has $E(M_2)$ as its unique proper non-empty cyclic flat. Hence $M^*$ has $E(M_1)$ as its unique proper non-empty cyclic flat so, by Lemma~\ref{nestedcyc}, $M^*$ is nested and hence is laminar. We may now assume that $r > r(M_1)$. For each $i$ in $\{1,2\}$, let $E_i = E(M_i)$. Then, by determining the hyperplanes of $M$, it follows  that the only non-spanning circuits of $M^*$ consist, for each permutation $(i,j)$ of  $\{1,2\}$ of those subsets of $E_i$ with exactly $|E_i| - r + r_j + 1$ elements. Thus $M^*|E_i$ is uniform for each $i$, and $M^*$ is obtained by truncating the direct sum of  $M^*|E_1$ and $M^*|E_2$ to rank $|E(M)| - r$. We deduce, from above, that $M^*$ is laminar. 

To prove the converse, let $M$ be a connected laminar matroid such that $M^*$ is laminar but $M$ is not nested. Let $(E,\A,c)$ be the canonical presentation of $M$. 
Since $M$ is not nested, $\A$ contains two disjoint sets, $A_1$ and  $A_2$. Without loss of generality, we may assume that $A_1$ and $A_2$ are minimal members of $\A$. Let $(E,\A^*,c^*)$ be the canonical presentation of $M^*$.
By Lemmas~\ref{DualFlats} and \ref{ConnectedFlats2}, $E-A_1$ and $E-A_2$ are in $\A^*$. Thus $E-A_1$ and $E-A_2$ are disjoint. Hence  $E=A_1\cup A_2$. Since the only non-spanning circuits of $M$  are contained in  $A_1$ or $A_2$, and $M|A_i$   is   uniform matroid for each $i$, the result follows.
\end{proof}

The last proof was inspired by the work of Bonin and de Mier~\cite{LPSP} on lattice path matroids. Elsewhere, we will describe the relationship between the class of such matroids and the class of laminar matroids.

\section{Which laminar matroids are binary or ternary?}
\label{Boring}

In this section, we determine precisely which laminar matroids are binary or ternary.  We begin by showing that the class of laminar matroids is closed under the operation of parallel extension.

\begin{lemma}
\label{pe}
Every parallel extension of a laminar matroid is laminar.
\end{lemma}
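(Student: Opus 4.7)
The plan is to exhibit a laminar presentation of $M'$, the parallel extension of a laminar matroid $M$ by an element $f$ placed in parallel to some $e\in E(M)$. We may assume that $e$ is not a loop, for otherwise $f$ is also a loop and the canonical presentation of $M$ extends trivially (adjoin $f$ to $\cl(\emptyset)$). Starting from the canonical presentation $(E,\A,c)$ of $M$, I would take
$$\A' \;=\; \{A\in\A : e\notin A\}\;\cup\;\{A\cup\{f\} : A\in\A,\; e\in A\}\;\cup\;\{\{e,f\}\}$$
and let $c'$ agree with $c$ on the first family, satisfy $c'(A\cup\{f\})=c(A)$ when $e\in A$, and set $c'(\{e,f\})=1$.

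First I would check that $\A'$ is a laminar family. The sets of $\A$ that contain $e$ pairwise intersect in $e$, so they form a chain under inclusion; adjoining $f$ to each preserves that chain. For $A\in\A$ with $e\notin A$, laminarity of $\A$ forces $A$ to be either disjoint from, or properly contained in, each member of $\A$ that contains $e$, and either relation is preserved after appending $f$ to the larger set. The new set $\{e,f\}$ is contained in every $A\cup\{f\}$ with $A\in\A$, $e\in A$, and is disjoint from every other member of $\A'$, so $\A'$ is laminar.

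Next I would verify $M' = M(E\cup\{f\},\A',c')$ by a case analysis on how $I\subseteq E\cup\{f\}$ meets $\{e,f\}$. If $\{e,f\}\subseteq I$, then $|I\cap\{e,f\}|=2>1=c'(\{e,f\})$, so $I$ is correctly detected as dependent (matching the fact that $\{e,f\}$ is a circuit of $M'$). Otherwise, let $J=I$ if $f\notin I$ and $J=(I\setminus\{f\})\cup\{e\}$ if $f\in I$; then $J\subseteq E$ and a direct count gives $|I\cap A'|=|J\cap A|$ whenever $A'$ is $A$ or $A\cup\{f\}$ for $A\in\A$, while the constraint on $\{e,f\}$ is automatically satisfied. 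Thus $I$ satisfies the capacity inequalities of $(\A',c')$ if and only if $J$ satisfies those of $(\A,c)$, which, by the definition of parallel extension, is equivalent to $I$ being independent in $M'$.

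The main subtlety is the necessity of adjoining $\{e,f\}$ to $\A'$: without it, an $I$ containing both $e$ and $f$ together with a few further elements could fail to violate any of the remaining inequalities (since a large ancestor $A\ni e$ may have capacity well above $2$) and would be wrongly declared independent. Including $\{e,f\}$ with capacity $1$ precisely enforces the parallelism of $e$ and $f$, and no other adjustment to the canonical presentation is required.
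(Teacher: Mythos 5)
Your proof is correct and takes essentially the same approach as the paper: augment every member of $\A$ that contains $e$ by $f$ (keeping capacities) and adjoin $\{e,f\}$ with capacity $1$. The only difference is that the paper adjoins $\{e,f\}$ only when $e$ lies in no $2$-circuit of $M$ (in the other case an existing set of capacity $1$ already enforces the parallelism, so the presentation stays free of redundant sets), whereas you add it unconditionally --- which is harmless, since a redundant constraint does not change the matroid --- and your laminarity check and independence correspondence via $J$ are sound.
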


\begin{proof} Let $M$ be a laminar matroid having $(E,\A,c)$ as its  canonical presentation. By Theorem~\ref{construct}, it suffices to prove the result when $M$ is loopless. Let $e$ be an element of $M$ and let $f$ be an element not in $E$. Suppose first that $e$ is in a $2$-circuit $C$ of $M$. Then $A_C \in \A$. Now add $f$ to every member of $\A$ that contains $e$ leaving the capacity of each such set unchanged.  It is straightforward to check that this process yields a laminar matroid that is an extension of $M$ and has   $\{e,f\}$ as a circuit. If $e$ is not in a $2$-circuit of $M$, then add $\{e,f\}$ to $\A$ as a set of capacity $1$,  and add $f$ to every member of $\A$ that contains $e$ leaving the capacity of each such set unchanged. Again,§ it is straightforward to check that this gives a laminar matroid that is a parallel extension of $M$.
\end{proof}

The characterizations of the laminar matroids that are binary or ternary involve the matroid $Y_r$. We recall that, for $r \ge 3$,  this matroid   is the truncation to rank $r$ of the parallel connection,  across the basepoint $p$, of two $r$-element circuits.  In particular, $Y_3$ is isomorphic to a single-element deletion of $M(K_4)$. Moreover, for all $r$, 

\begin{equation}
\label{yr}
Y_r\backslash p \cong U_{r,2r-2}. 
\end{equation}

\begin{theorem}
\label{Binary}
The following are equivalent for a matroid $N$.
\begin{enumerate}[label=(\roman*)]
\item Each component of $N$ has rank at most one or can be obtained from a circuit by a sequence of parallel extensions.
\item $N$ is graphic and laminar.
\item $N$ is regular and laminar.
\item $N$ is binary and laminar.
\item $N$ has no minor in $\{Y_3,U_{2,4}\}$
\end{enumerate}
\end{theorem}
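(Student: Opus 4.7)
The plan is to prove the five conditions equivalent by establishing the cycle
(i) $\Rightarrow$ (ii) $\Rightarrow$ (iii) $\Rightarrow$ (iv) $\Rightarrow$ (v) $\Rightarrow$ (i).
For (i) $\Rightarrow$ (ii): circuits and rank-$\le 1$ matroids are graphic, parallel extensions of a graphic matroid remain graphic (add a parallel edge in the underlying graph), and direct sums of graphic matroids are graphic; for the laminar half, Lemma~\ref{pe}, Corollary~\ref{rsmall}, and Theorem~\ref{construct} together show that every matroid appearing in (i) is laminar. The implications (ii) $\Rightarrow$ (iii) $\Rightarrow$ (iv) are the standard inclusions graphic $\subseteq$ regular $\subseteq$ binary. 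For (iv) $\Rightarrow$ (v): $U_{2,4}$ is not binary, and by Theorem~\ref{exmin} no laminar matroid has $Y_3$ as a minor.

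The heart of the proof is (v) $\Rightarrow$ (i). I would begin by upgrading the hypothesis: for each $r \ge 4$, the identity $Y_r\backslash p = U_{r,2r-2}$ of (\ref{yr}) exhibits a $U_{2,4}$-minor of $Y_r$, since contracting any $r-2$ elements of $U_{r,2r-2}$ gives $U_{2,r}$, which in turn contains $U_{2,4}$ as a restriction. Hence condition (v) rules out every $Y_r$ with $r \ge 3$, so $N$ is laminar by Theorem~\ref{exmin}; of course $N$ is also binary. The class in (i) is closed under direct sums and parallel extensions, while the property in (v) is preserved under minors, so by splitting $N$ into components and simplifying each component of rank $\ge 2$ we reduce to proving the following claim: \emph{a simple, connected, binary, laminar matroid $N$ of rank $r \ge 2$ is a single circuit.}

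To prove this claim, Corollary~\ref{conn} gives a spanning circuit $C$ with $\cl(C)=E$ and $|C|=r+1$. If $E=C$ then $N$ is the circuit $U_{r,r+1}$ and we are done, so suppose there exists $e \in E-C$. Applying Lemma~\ref{TwoCircuitClosure} to $e\in\cl(C)-C$, we obtain circuits $D$ and $D'$ with $e\in D\cap D'$ and $D\cup D' = C\cup\{e\}$. Since $N$ is binary, $D\triangle D'$ is a disjoint union of circuits contained in $C$; but the only dependent subset of the circuit $C$ is $C$ itself, forcing $D\triangle D' = C$, hence $D\cap D' = \{e\}$, and producing a partition $C = S\cup T$ with $D = S\cup\{e\}$ and $D' = T\cup\{e\}$. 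Simplicity of $N$ gives $|D|,|D'|\ge 3$, so $|S|,|T|\ge 2$; choose distinct $s_1,s_2\in S$ and $t_1,t_2\in T$, and let $Z := C - \{s_1,s_2,t_1,t_2\}$. Then $Z$ is a proper subset of the circuit $C$, hence independent, and contracting $Z$ inside the restriction $N|(C\cup\{e\})$ produces a rank-$3$ matroid on $\{s_1,s_2,t_1,t_2,e\}$ whose circuits are precisely $\{s_1,s_2,e\}$, $\{t_1,t_2,e\}$, and $\{s_1,s_2,t_1,t_2\}$. This minor is isomorphic to $Y_3$, contradicting (v).

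The main obstacle is the partition step that forces $D\cup D' = C\cup\{e\}$ to split so cleanly across $e$: it combines binary circuit elimination with the crucial fact that $C$ is itself a circuit, so no proper subset of $C$ can be dependent. Once the partition is available, verifying that the advertised contraction is $Y_3$ is a routine rank computation, made transparent by the independence of $Z$.
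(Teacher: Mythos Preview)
Your proof is correct, and the chain (i)$\Rightarrow$(ii)$\Rightarrow$(iii)$\Rightarrow$(iv)$\Rightarrow$(v) matches the paper's.  The implication (v)$\Rightarrow$(i), however, follows a genuinely different route.  The paper factors it as (v)$\Rightarrow$(ii) and (ii)$\Rightarrow$(i): for (v)$\Rightarrow$(ii) it invokes Tutte's excluded-minor characterization of graphic matroids (observing that a $U_{2,4}$-free matroid automatically avoids $F_7$, $F_7^*$, $M^*(K_5)$, and $M^*(K_{3,3})$), and then establishes (ii)$\Rightarrow$(i) by a short graph-theoretic argument, taking a longest cycle in a simple $2$-connected graph with no $K_4\backslash e$-minor.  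Your argument bypasses Tutte's theorem entirely and works inside the matroid: the spanning circuit from Corollary~\ref{conn}, Lemma~\ref{TwoCircuitClosure}, and the binary symmetric-difference property combine to exhibit a $Y_3$-minor directly whenever the simplified component is larger than a single circuit.  Your approach is more self-contained relative to the machinery developed in this paper; the paper's approach is shorter once one is willing to quote the excluded minors for graphic matroids.
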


\begin{proof}
It is clear that each of (i)--(iv) implies its successor.  We complete this proof by showing that (v) implies (ii), and that (ii) implies (i).
To show that \5 implies \2, suppose that $N$ has no $Y_3$- or $U_{2,4}$-minor. Then $N$ has no minor in  $\{U_{2,4}, F_7, F_7^*, M^*(K_5), M^*(K_{3,3})\}$. Thus $N$ is graphic. Moreover,    by (\ref{yr}), $Y_r$ has a $U_{2,4}$-minor for all $r\geq 4$. It follows, by Theorem~\ref{exmin}, that $N$ is laminar. Thus  \5 implies \2.

To show that \2 implies \1, suppose that $N$ is a graphic laminar matroid. Then, by Theorem~\ref{construct} and Lemma~\ref{pe}, we may assume that $N = M(G)$ for some  simple 2-connected graph $G$ having at least three vertices.   Now $G$ does not have $K_4 \e$ as a minor. Take a maximal-length cycle $C$ in $G$. Then either $E(G) = E(C)$, or $G\backslash E(C)$ has a path joining distinct non-consecutive vertices of $C$. In the latter case,  $G$ has $K_4 \e$ as a minor. This contradiction completes the proof.
\end{proof}

To prove the characterization of ternary laminar matroids, we shall use a   result of Cunningham and Edmonds (in \cite{cunn}) that decomposes a $2$-connected matroid into circuits, cocircuits, and $3$-connected matroids. %In the discussion to follow, some details will be omitted. 
The terminology used here follows \cite[Section 8.3]{James}.

\begin{theorem}
\label{2tree2}
Let $M$ be a $2$-connected matroid. Then $M$ has a tree decomposition 
$T$ in which every vertex label is $3$-connected, a circuit, or a cocircuit, and there are 
no two adjacent vertices that are both labelled by circuits or are both labelled by cocircuits. 
Moreover, $T$ is unique to within relabelling of its edges.
\end{theorem}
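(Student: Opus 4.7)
The plan is to prove existence by induction on $|E(M)|$ and then establish uniqueness by showing that the family of $3$-connected components, circuits, and cocircuits appearing in any valid tree decomposition is an invariant of $M$.

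For existence, the base cases occur when $M$ is itself $3$-connected, a circuit, or a cocircuit; in each such case $T$ is a single vertex labelled by $M$. Otherwise, $M$ is $2$-connected but not $3$-connected, so it admits a non-trivial $2$-separation $(X,Y)$ with $|X|,|Y|\ge 2$. By the standard theory of matroid $2$-sums, one can write $M = M_X \oplus_2 M_Y$ along a basepoint $p$, where $M_X$ and $M_Y$ are $2$-connected matroids on $X\cup\{p\}$ and $Y\cup\{p\}$ respectively, each with strictly fewer elements than $M$. Applying the inductive hypothesis yields tree decompositions $T_X$ and $T_Y$ for $M_X$ and $M_Y$. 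There is a unique vertex of $T_X$ whose label contains $p$, and similarly for $T_Y$; glue $T_X$ and $T_Y$ by introducing an edge labelled $p$ between these two distinguished vertices. Then sanitize: whenever the gluing produces two adjacent circuit vertices, merge them into their $2$-sum, which is again a circuit, and do likewise for adjacent cocircuit vertices. The result is a tree of the required form.

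For uniqueness, one shows that every non-trivial $2$-separation $(X,Y)$ of $M$ is ``displayed'' by some edge of $T$, in the sense that the partition $(X,Y)$ either matches the partition induced by removing that edge or arises by further partitioning a single circuit or cocircuit vertex. Conversely, every internal edge of $T$ displays a $2$-separation. One then classifies pairs of $2$-separations as \emph{parallel} (displayed by the same edge), \emph{nested} (displayed by distinct edges with a compatible tree structure), or \emph{crossing} (forcing the existence of a circuit or cocircuit vertex at which both split). The prohibition on adjacent circuit-circuit and cocircuit-cocircuit vertices guarantees that this classification produces a canonical set of equivalence classes of $2$-separations, each of which corresponds to a single vertex of any valid $T$. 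A bijection between the vertex sets of two valid decompositions $T, T'$ respecting labels and adjacencies then follows.

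The main obstacle is uniqueness, and specifically the analysis of crossing $2$-separations: different choices of the initial $2$-separation in the existence step can a priori yield different trees, and one must verify that all such trees coincide after the circuit/cocircuit merging step. The technical heart is showing that the equivalence classes of $2$-separations are intrinsic to $M$, which is the content of the Cunningham--Edmonds analysis and mirrors Tutte's structure theorem for $3$-connected components of $2$-connected graphs.
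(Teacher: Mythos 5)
This statement is the Cunningham--Edmonds decomposition theorem, which the paper does not prove at all: it is quoted from Cunningham's thesis \cite{cunn}, and its proof is likewise omitted from the standard reference \cite[Section 8.3]{James}, so there is no in-paper argument to compare yours against and the proposal must be judged on its own merits. Your existence half follows the standard route and is essentially sound: a $2$-connected matroid that is not $3$-connected, a circuit, or a cocircuit has an exact $2$-separation $(X,Y)$ with $|X|,|Y|\geq 2$, hence is a $2$-sum $M_X \oplus_2 M_Y$ of strictly smaller $2$-connected matroids; the inductive trees glue along the basepoint $p$ (which lies in exactly one vertex label on each side); and since $T_X$ and $T_Y$ are already canonical, the only possible circuit--circuit or cocircuit--cocircuit adjacency occurs at the new edge, so your ``sanitize'' step in fact terminates after at most one merge, the $2$-sum of two circuits being a circuit and dually for cocircuits.

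The uniqueness half, however, contains a genuine gap, and you have located it precisely: every load-bearing claim there is asserted rather than proved. The assertions that (a) each $2$-separation of $M$ is displayed by an edge of $T$ or refines a single circuit or cocircuit vertex, (b) crossing $2$-separations force a circuit or cocircuit vertex at which both split, and (c) the resulting equivalence classes of $2$-separations are intrinsic to $M$ jointly constitute the hard content of the theorem, and your closing sentence defers them to ``the Cunningham--Edmonds analysis''---which is circular, since that analysis is exactly what is to be proved. The missing mathematics is the uncrossing lemma: if $(X,Y)$ and $(U,V)$ are crossing exact $2$-separations, submodularity of the connectivity function forces the corner sets $X\cap U$, $X\cap V$, $Y\cap U$, $Y\cap V$ to be $2$-separating or trivial, and one must then show that the elements caught between two crossing separations form a series or parallel class, so that the crossing is absorbed into a single cocircuit or circuit vertex of the tree. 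Without that lemma you also cannot exclude the possibility you flag yourself, namely that different choices of the initial $2$-separation in the existence induction yield genuinely different sanitized trees. As written, your proposal establishes existence and gives a correct roadmap for uniqueness, but does not prove uniqueness; for the purposes of this paper the authors' citation is the appropriate treatment, and a self-contained proof would require carrying out the uncrossing analysis in full.
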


The tree decomposition of $M$ whose uniqueness is guaranteed by the last theorem is called the {\it canonical tree decomposition} of $M$.

\begin{theorem}
\label{ternary} 
The following are equivalent for a matroid $N$.
\begin{enumerate}[label=(\roman*)]
\item $N$ is ternary and laminar.
\item $N$ has no minor in $\{\Utfi,\Uthfi, Y_3\}$.
\item Each component of $\si(N)$ has rank at most one, is   $U_{2,4}$  or $U_{2,4} \oplus_2 U_{2,4}$, or can be obtained from an $n$-circuit for some $n \ge 3$ by $2$-summing on copies of $U_{2,4}$ across $k$ distinct elements of the circuit for some $k$ in $\{0,1,\dots,n\}$.
\end{enumerate}
\end{theorem}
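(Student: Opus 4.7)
The plan is to prove the equivalences (i) $\Leftrightarrow$ (ii) $\Leftrightarrow$ (iii) in turn. For (i) $\Rightarrow$ (ii), ternary matroids exclude $U_{2,5}$ and $U_{3,5}$, and Theorem~\ref{exmin} gives $Y_3$-exclusion for laminar matroids. For the converse, I would verify that forbidding $\{U_{2,5}, U_{3,5}, Y_3\}$ already forbids the remaining ternary excluded minors $F_7, F_7^*$ and the laminar excluded minors $Y_r$ for $r \ge 4$. Specifically: $F_7 \backslash e \cong M(K_4)$, and deleting any edge of $M(K_4)$ leaves exactly two triangles sharing one element, which is $Y_3$; dually, $F_7^*/e \cong M^*(K_4)$, and deleting any element of $M^*(K_4)$ leaves the two vertex-star triangles of the two non-incident vertices, also sharing exactly one element, giving $Y_3$; finally, by~(\ref{yr}), $Y_r \backslash p \cong U_{r, 2r-2}$ for $r \ge 3$, and for $r \ge 4$ this contracts down to $U_{3, r+1}$ and then restricts to $U_{3,5}$.

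For (iii) $\Rightarrow$ (i), each matroid listed is clearly ternary since ternary is closed under direct sum, $2$-sum, and parallel extension, and the base pieces (circuits and $U_{2,4}$) are ternary. For laminarity I would apply the corollaries of Theorem~\ref{Converse}: a single circuit or $U_{2,4}$ has at most one non-spanning circuit, hence is laminar by Corollary~\ref{AtMostOneNonSpanningCircuit}; $U_{2,4} \oplus_2 U_{2,4}$ has exactly two disjoint non-spanning circuits, so Corollary~\ref{OnlyCheckNonSpanningCircuit} applies; and for an $n$-circuit with copies of $U_{2,4}$ attached via $2$-sum on $k$ distinct elements, the non-spanning circuits are precisely the $k$ pairwise-disjoint triangles from the attached pieces, so Corollary~\ref{OnlyCheckNonSpanningCircuit} again gives laminarity. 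Finally, Lemma~\ref{pe} handles the passage from $\si(N)$ back to $N$ via parallel extensions.

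The main work is in (ii) $\Rightarrow$ (iii). I would reduce to a single $2$-connected simple ternary laminar component $N$ of rank at least $2$, then apply the Cunningham--Edmonds decomposition (Theorem~\ref{2tree2}) to obtain the canonical tree $T$ of $N$; each node of $T$ is a minor of $N$ and hence itself ternary and laminar. The first key step is to show that every $3$-connected node of $T$ on at least four elements is $U_{2,4}$: rank $2$ is immediate from excluding $U_{2,5}$, while for rank at least $3$ a case analysis using the canonical laminar presentation (Theorem~\ref{Uniqueness}) and the excluded minors $U_{3,5}$ and $Y_3$ rules out every other possibility. The second key step is to constrain the shape of $T$: any path of three or more $U_{2,4}$-nodes in $T$ produces a $Y_3$- or $U_{3,5}$-minor by contracting an element of the middle piece, and two $U_{2,4}$-nodes attached at the same element of a circuit-node similarly produce a forbidden minor. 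This forces $T$ into one of the forms listed in (iii): a single $U_{2,4}$, two $U_{2,4}$'s joined by one edge (giving $U_{2,4} \oplus_2 U_{2,4}$), or a star with one circuit center and $U_{2,4}$ leaves on distinct circuit elements. The main obstacle I expect is this structural analysis, particularly the explicit minor-exhibitions ruling out each disallowed tree configuration; once those are in hand, the remaining reductions and direct-sum/simplification bookkeeping are routine.
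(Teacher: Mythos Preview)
Your overall plan matches the paper's: cycle through (i)$\Leftrightarrow$(ii) using excluded minors and (\ref{yr}), handle (iii)$\Rightarrow$(i) or (iii)$\Rightarrow$(ii) by direct inspection, and attack (ii)$\Rightarrow$(iii) via the Cunningham--Edmonds tree decomposition. The equivalence (i)$\Leftrightarrow$(ii) and the implication (iii)$\Rightarrow$(i) are fine as you wrote them.

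There are two substantive gaps in your (ii)$\Rightarrow$(iii) argument. First, you never discuss cocircuit vertices of the canonical tree $T$. In Theorem~\ref{2tree2} the vertex labels are $3$-connected matroids, circuits, \emph{or cocircuits}, and you must rule the last of these out. The paper does this in two strokes: no leaf of $T$ can be a cocircuit because $N$ is simple (a cocircuit leaf would introduce parallel elements), and no interior cocircuit vertex can occur because its two neighbours are each circuits or copies of $U_{2,4}$, which forces a $P(U_{2,3},U_{2,3})\cong Y_3$ minor. Without this step you cannot conclude that $T$ is a star with a circuit centre. Second, your argument that $U_{2,4}$-vertices are leaves is incomplete: ruling out a path of three $U_{2,4}$'s does not handle a $U_{2,4}$-vertex with one circuit neighbour and one $U_{2,4}$ neighbour, nor one with two circuit neighbours. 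The paper argues uniformly that any $U_{2,4}$-vertex with two neighbours yields a $Y_3$-minor, since each neighbour is a circuit or a $U_{2,4}$. (Your worry about two $U_{2,4}$-leaves attached at the \emph{same} basepoint of a circuit node is moot: distinct tree edges carry distinct basepoints.)

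On the $3$-connected vertices you propose a direct case analysis via the canonical laminar presentation. This can be made to work, but the paper's route is much shorter: by Tutte's Wheels-and-Whirls Theorem every $3$-connected matroid on at least four elements has a $U_{2,4}$- or $M(K_4)$-minor; since $M(K_4)\backslash e\cong Y_3$, and any $3$-connected proper extension or coextension of $U_{2,4}$ is $U_{2,5}$ or $U_{3,5}$, the only $3$-connected vertex label surviving the hypothesis of (ii) is $U_{2,4}$ itself. Once cocircuits are eliminated and every $U_{2,4}$ is a leaf, $T$ has at most one non-leaf vertex, necessarily a circuit, and the description in (iii) follows immediately.
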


\begin{proof}
It is clear that \1 implies \2. Moreover, it follows from (\ref{yr}) that \2 implies \1. 

To show that \2 implies \3, suppose that 
$N$ has no minor in $\{\Utfi,\Uthfi, Y_3\}$. Clearly, we may also assume that $N$ is simple and $2$-connected and 
$r(N) \ge 2$. By Tutte's Wheels-and-Whirls Theorem (see, for example, \cite[Theorem 8.8.4]{James}), every $3$-connected matroid with at least four elements 
has a $U_{2,4}$- or $M(K_4)$-minor. As $N$ has no minor in $\{\Utfi,\Uthfi, Y_3\}$, the only possible $3$-connected 
minor of $N$ with at least four elements is $U_{2,4}$. Consider the canonical tree decomposition $T$ for $N$. 
By Theorem~\ref{Binary}, we may assume that $N$ is non-binary. Then $T$ has  a vertex  labelled by a copy of 
$U_{2,4}$ and every other vertex of $T$ is labelled by a circuit, a cocircuit, or a copy of $U_{2,4}$. Moreover, as 
$N$ is simple, no leaf of $T$ is labelled by a cocircuit. If $T$ has an interior vertex labelled by a cocircuit, then 
this vertex has  neighbors $x$ and $y$ each of which is labelled by a circuit or a copy of $U_{2,4}$. Thus $N$ 
has as a minor the parallel connection of  two copies of $U_{2,3}$, so $N$ has a $Y_3$-minor; a contradiction. 
Now every $U_{2,4}$-labelled vertex of $T$ is a leaf since if a $U_{2,4}$-labelled vertex has two neighbors, 
then each is labelled by a circuit or a copy of $U_{2,4}$, so $N$ has $Y_3$ as a minor. It follows that  $N$ 
is $U_{2,4}$  or $U_{2,4} \oplus_2 U_{2,4}$, or $N$ can be obtained from an  $n$-circuit $C$ by $2$-summing 
on copies of $U_{2,4}$ across $k$ distinct elements of $C$ for some $k$ with $0 \le k \le n$. We deduce that \2 implies \3. 

Finally, suppose  \3 holds. Then one easily checks that $N$ has no minor in $\{\Utfi,\Uthfi, Y_3\}$, so \3 implies \2, and the theorem holds.
\end{proof}

\end{document}